\documentclass[11pt]{article}
\usepackage{amsmath, amssymb, amsfonts}
\usepackage{amsthm}

\usepackage[mathlines,pagewise]{lineno}
\usepackage{enumitem}
\usepackage{cite}
\newtheorem{lemma}{Lemma}           
\usepackage{helvet}
\usepackage{graphicx}
\usepackage{hyperref}
\usepackage{geometry}
\usepackage{graphicx}
\usepackage{caption}
\usepackage{chngcntr}
\usepackage{titlesec}
\usepackage{float} 
\usepackage{todonotes}

\DeclareCaptionLabelSeparator{pipe}{\ \textbar\ }
\DeclareCaptionLabelSeparator{pipe}{\ \textbar\ }
\renewcommand\thesubsection{\arabic{section}.\arabic{subsection}\hspace{-0.7em}}

\title{Solving and Learning Advective Multiscale Darcian Dynamics with the Neural Basis Method}

\author{
Yuhe Wang\thanks{%
\hangindent=1.8em\hangafter=1\relax
Asia Pacific Technology, Inc., Houston, TX 77042, USA;
Institute for Scientific Computation, Texas A\&M University, College Station, TX 77843, USA.
\ \texttt{yuhe.wang@me.com}%
}
\and
Min Wang%
\thanks{Department of Mathematics, University of Houston, Houston, TX 77204, USA. \texttt{mwang55@central.uh.edu}}%
}

\date{}

\begin{document}
\renewcommand{\thefootnote}{\fnsymbol{footnote}}
\setcounter{footnote}{0}

\renewcommand{\thefootnote}{\fnsymbol{footnote}}
\setcounter{footnote}{0}

\maketitle
\vspace{-1em}


\begin{abstract}
\noindent
Physics--governed models are increasingly paired with machine learning for accelerated computations, yet most ``physics--informed'' formulations treat the governing equations as a penalty loss whose scale and meaning are set by heuristic balancing. 
This blurs operator structure, thereby confounding solution approximation error with governing-equation enforcement error and making the solving and learning progress hard to interpret and control. 
Here we introduce the Neural Basis Method, a projection-based formulation that couples a predefined, physics-conforming neural basis space with an operator-induced residual metric to obtain a well-conditioned deterministic minimization. 
Stability and reliability then hinge on this metric: the residual is not merely an optimization objective but a computable certificate tied to approximation and enforcement, remaining stable under basis enrichment and yielding reduced coordinates that are learnable across parametric instances.
We use advective multiscale Darcian dynamics as an engineering demonstration to illustrate this broader point. 
Our method produces accurate and robust solutions in single solves and enables fast and effective parametric inference for essential engineering tasks such as control, optimization and uncertainty quantification. 
\vspace{1em}  

\noindent\textbf{Keywords:} neural basis method, operator learning, parametric inference, reduced-order modeling, Darcy flow and transport, multiscale flow simulation  
\end{abstract}

\section{\textbf{Introduction}}
\vspace{-1.ex} 
Flow--transport coupling in multiscale porous materials \cite{moyner2019multiscale,farber2025porous} poses major challenges for modeling, especially in parametric many-query workflows \cite{alcalde2018estimating,krevor2023subsurface}.
Governed by elliptic–parabolic and advective PDEs with heterogeneous coefficients, such multiphysics dynamics entail conservation constraints, nonlinear effects, and multiscale interactions \cite{hunt2017flow,vanden2007heterogeneous}. 
Physics-informed machine learning \cite{karniadakis2021physics}, such as physics-informed neural networks (PINNs) \cite{raissi2019physics,wang2023expert}, is often presented as promising because it offers mesh-free approximations, seamless data integration, and potential learning-enabled acceleration. 
In practice, however, most approaches enforce physics through dimensionless $L^2$ loss minimization with heuristic weighting, which obscures physical scaling and disrupts
numerical structure. This can yield poorly conditioned optimization in which approximation error is entangled with operator inconsistency, undermining accuracy and reliability \cite{wang2021understanding,bischof2025multi,krishnapriyan2021characterizing,wu2023comprehensive, wang2022and}. 
This raises a fundamental question: where should physics and numerical structure reside in learning-based PDE models? 

In this work, we propose the Neural Basis Method (NBM), which departs from loss-driven training by seeking solutions through an explicit, deterministic procedure. 
We use predefined physics-conforming neural networks to construct finite-dimensional approximation spaces.
Then, we impose PDEs by formulating a well-defined projection problem and solve for the solution coefficients. 
This design aligns NBM with classical numerical analysis and allows us to incorporate variational formulations \cite{xu2025weak} in a systematic and principled manner. 
Furthermore, for parametric cases, we represent solution families across parameter instances within the same neural space by learning the parameter-dependent solution coefficients, which naturally yields an operator learning formulation. 
This perspective is especially useful in many-query engineering settings under varying coefficients, boundary conditions, or control inputs.

NBM is motivated by two lines of ideas: projection-based methods (e.g., Galerkin), which use fixed finite-dimensional space to enable stability, error control, and systematic refinement 
\cite{brenner2008mathematical,babuvska1978posteriori}, and neural approximation approaches, which offer expressive spaces for representing solution manifolds \cite{cybenko1989approximation,barron2002universal,yarotsky2017error}.
NBM therefore aims to integrate neural representations while retaining the structure and guarantees of projection-based schemes.  
Related ideas have recently appeared, including random feature and extreme learning machine-type methods, where fixed single-hidden-layer neural networks serve as basis functions and solution coefficients are obtained by least-squares \cite{huang2006extreme,chen2022bridging,rahimi2007random,dong2021local,zhang2024transferable,shang2023randomized,shang2024randomized,sun2024local}. 
They provide an important proof-of-concept, but the non-orthogonal bases often yield severely ill-conditioned least-squares systems \cite{chen2011minimizing,zhang2025fourier,van2026local,chen2024high}. 
Especially, for multiscale problems, resolving fine-scale structures requires basis enrichment \cite{chen2022bridging,hu2025morphology,zhang2025shallow,chi2024random}, which can drive rapid, often exponential, condition-number growth. 
NBM builds on this direction while departing in three ways: a multilayer neural basis generator that resists conditioning degradation under basis enrichment; an essential physical structure that is built in at vector field representation level; and a weighting and stabilization strategy that enforces consistency with the underlying operator and physical scaling.
Our design draws on insights from least-squares finite element methods (LSFEM) and the discontinuous Petro-Galerkin (DPG) framework. 
LSFEM inspires properly scaled boundary residuals in seeking solutions via operator-residual minimization \cite{bochev2009least,starke1999multilevel,cai1994first,cai1997first}.
DPG theory connects optimal test norms for multiscale and advective problems to generalized least-squares formulations \cite{demkowicz2010class,demkowicz2011class}.  
Collectively, these insights motivate the NBM view that weighting and stabilization are treated as principled components of the projection-based approach rather than as heuristic loss balancing, preserving operator consistency and physical scaling and enabling reliable residual-based measures. 

A direct consequence of this projection-based perspective is that NBM provides a built-in, physically meaningful measure of solution quality. 
This stands in contrast to the penalty losses used in many physics-informed neural approaches, whose magnitude and minimization depend sensitively on heuristic balancing and offer limited insight into operator consistency. 
In NBM, the residual serves as a numerically interpretable certificate tied to the discrete PDE enforcement, supporting systematic reasoning about approximation error.
Thus, residual reduction can be interpreted within a projection framework analogous to classical schemes, enabling stability analysis and error assessment in the spirit of Galerkin theory, including Céa-type approximation properties. 
This residual-based notion of measure underpins the reliability of both solution accuracy and subsequent parametric inference built on the learned representation.

We then extend NBM from single instance solves to parametric operator learning (NBM-OL).  
In many-query engineering tasks, including control, optimization, and uncertainty quantification, the goal is to learn how the solution operator varies with governing parameters \cite{lu2021learning,wang2021learning,kovachki2023neural}. 
This necessitates a measure that is numerically comparable across parameters and, under stable projection, provides a computable a posteriori bound on the solution error \cite{grepl2005posteriori, qiu2025variationally}.  
In NBM-OL, we learn parametric dependence in the fixed neural basis space equipped with such measure, instead of surrogate losses that lack direct numerical meaning. 
This provides a robust utility for monitoring training and steering refinement across hyperparameters, with well-informed stopping criterion. 

In this study, we present the first complete formulation and demonstration of NBM/NBM-OL in the setting of advective multiscale Darcian dynamics, with carbon storage as a representative application.  
We use physics-conforming neural vector bases in a nonlinear mixed Darcy flow formulation with energy-consistent weighting, and advance transport using a purely advective update stabilized by upwind control-volume treatment.
As an introductory paper, we adopt a collocation-based realization for clarity and efficiency, although the method also supports variational formulations.
Numerical experiments show accurate and stable single-instance solutions, and, for parametric inference, excellent in-distribution generalization, strong out-of-distribution robustness, and massive acceleration.
In representative examples, we observe computational speedups on the order of $10^3$--$10^4$, and the gain scales with the prediction temporal horizon and problem size. 

This paper is organized as follows. In the setting of coupled Darcy flow--transport, we first present the core ingredients of NBM and evaluate its performance on single-instance problems. We then turn to parametric many-query regimes, where variability arises from permeability fields and boundary conditions, and show how NBM-OL enables efficient parametric inference. Together, we illustrate the central principle of NBM: by placing physics in the neural discretization through physics-conforming approximation spaces and operator-induced residual metrics, the method unifies direct PDE solving and parametric operator learning within the same projection-based formulation.

\section{\textbf{Overview of NBM as a neural PDE solver}}
\vspace{-1.5ex}
The overall workflow of NBM for coupled multiscale Darcy flow--transport (see Appendix~\ref{Appx:A}) is summarized in Fig.~\ref{fig:fig1}. 
It brings together the main components of the collocation-based implementation, including the underlying physical system, neural basis approximations for scalar (pressure, concentration) and vector fields (mass-flux), the mixed weighted least-squares formulation for the multiscale Darcy flow solution with energy-consistent weighting of interior and boundary residuals, and the implicit time-marching procedure for the coupled flow--transport dynamics. 
\begin{figure}[ht!]
    \centering
    \includegraphics[width=1\linewidth]{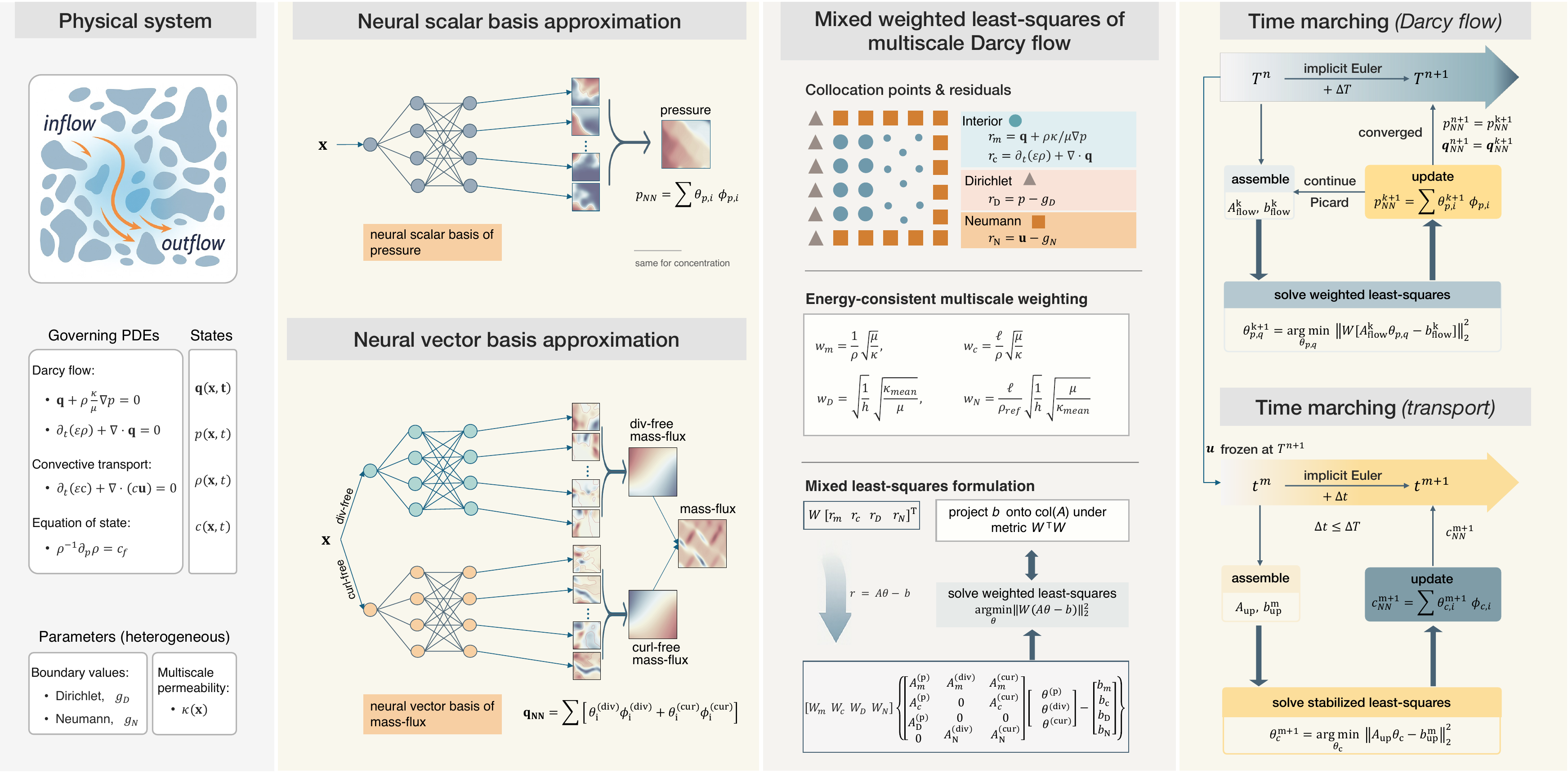}
    \caption{%
    \bfseries Overview of NBM as a neural PDE solver for coupled multiscale Darcy flow--transport.\,
    \bfseries Left, \normalfont Physical system.\,
    \bfseries Middle-left, \normalfont Neural basis approximation for scalar and vector field.\,
    \bfseries Middle-right, \normalfont Mixed weighted least-squares projection for multiscale Darcy flow.\,
    \bfseries Right, \normalfont Time marching of Darcy flow and transport.%
    }
    \label{fig:fig1}
\end{figure}
\vspace{-1.5ex}
\subsection{Core NBM formulation}
\vspace{-1.5ex}
NBM approximates solutions with finite neural basis functions and computes the solution coefficients through a PDE-constrained least-squares projection. 
For system \eqref{eq:momentum}-\eqref{eq:mass}, the pressure $p(\mathbf{x},t)$ and mass-flux $\mathbf{q}(\mathbf{x},t)$ are smooth and therefore suit global basis expansion naturally. 
In contrast, the concentration $c(\mathbf{x},t)$ can develop limited regularity with discontinuous fronts and thus requires specialized stabilization or regularization. 
In general, we write $s_{NN}(\mathbf{x},t)=\sum \theta_i(t)\,\phi_i(\mathbf{x})$, where a multilayer, fixed-parameter, ResNet-style network \cite{he2016deep} generates the scalar spatial bases $\{\phi_i(\mathbf{x})\}$. Moreover, we let the coefficient vector $\boldsymbol{\theta}(t)$ carry temporal evolution.
Here $s$ denotes a scalar field, namely pressure, concentration, or a mass-flux component such as $q_x$ or $q_y$.
We freeze network weights/biases, so we defines a deterministic approximation space $\mathrm{span}\{\phi_i\}$ and obtain the solution by projecting onto this space rather than by training the network.
Specifically, we use two hidden layers to balance expressiveness and numerical conditioning while maintaining a clear geometric interpretation of the induced basis functions. We refer to this construction as a dual-layer neural basis.

For the vector field $\mathbf{q}$, we propose a neural basis design inspired by the Helmholtz decomposition \cite{robinson2016three,wang2009robust} such that we approximate via a sum of a divergence-free part $\boldsymbol{\phi}_i^{(\mathrm{div})}$ and curl-free part $\boldsymbol{\phi}_i^{(\mathrm{div})}$,
\begin{equation}
\mathbf{q}_{NN}
=
\sum
\Big[
\theta_i^{(\mathrm{div})}\,\boldsymbol{\phi}_i^{(\mathrm{div})}
+
\theta_i^{(\mathrm{curl})}\,\boldsymbol{\phi}_i^{(\mathrm{curl})}
\Big].
\label{eq:vector_basis}
\end{equation}
We build the curl-free part as $\boldsymbol{\phi}_i^{(\mathrm{curl})}=\nabla\phi_i$, which represents the irrotational flow driven by potential.
For the divergence-free part, we use a streamfunction formulation in 2D,
$\boldsymbol{\phi}_i^{(\mathrm{div})}=\nabla^{\perp}\phi_i$, and a vector potential formulation in 3D,
$\boldsymbol{\phi}_i^{(\mathrm{div})}=\nabla\times\mathbf{F}_i$, 
where $\mathbf{F}_i$ can be constructed from the same scalar basis $\{\phi_i\}$.
By this design, $\nabla\cdot\boldsymbol{\phi}_i^{(\mathrm{div})}=0$ holds identically, so only the curl-free subspace enters equation \eqref{eq:mass}; it supplies the degrees of freedom needed to represent compressible effects and source-driven mass redistribution, while the divergence-free component naturally captures mass-preserving motion.
This physics-conforming representation preserves the intrinsic structure of fluid motion as much as possible at the representation level and can effectively improve solution accuracy. See Appendices~\ref{Appx:B}-\ref{Appx:E} for details of construction and properties of neural basis functions.

To compute the coefficient vector $\boldsymbol{\theta}$ at each time step or linearization iterate, NBM enforces \eqref{eq:pde_system} via minimizer
\(
\boldsymbol{\theta}^*
=
\arg\min_{\boldsymbol{\theta}}
\left\|
\mathcal{L}\!\left[\sum \theta_i \phi_i\right]
\right\|_{\Omega}^{2}
\;+\;
\left\|
\mathcal{B}\!\left[\sum \theta_i \phi_i\right]
- g
\right\|_{\partial\Omega}^{2},
\)
which is linear in $\{\theta_i\}$ with $\{\phi_i\}$ being fixed. After neural discretization, it becomes
$
\boldsymbol{\theta}^*
=
\arg\min_{\boldsymbol{\theta}}
\;\big\| \mathbf{A}\,\boldsymbol{\theta} - \mathbf{b} \big\|_\mathbf{V}^2 ,
$
where $\mathbf{A}$ and $\mathbf{b}$ encode the discretized PDE operators and boundary conditions under certain time stepping or linearization, and $\mathbf{V}$ denote a properly-chosen norm in a general sense.
In collocation-based realization we enforce the PDEs at collocation points.
This minimization admits a projection interpretation: its residual is orthogonal to $\mathrm{col}(\mathbf{A})$ under
$\langle \cdot,\cdot\rangle_{\mathbf{V}}$, so we enforce the PDE constraints via a least-squares projection in the neural basis space. 
In practice, NBM removes nonconvex training altogether: at each update, $\boldsymbol{\theta}$ is obtained by solving a deterministic linear or linearized least-squares system, retaining the operator structure of classical schemes.
Furthermore, under standard continuity and stability assumptions, this projection admits a probabilistic Céa-type bound: the error is controlled by the best approximation in the $N_b$-dimensional neural basis space, plus a high-probability $N_b^{-1/2}$ term, which is a conservative baseline guarantee provided by the following theoretical analysis.

\begin{lemma}[Probabilistic Céa-type bound for neural basis spaces]
\label{lem:probabilistic_cea}
Let $V$ be a Hilbert space equipped with norm $\|\cdot\|_V$, and let
$a(\cdot,\cdot):V\times V\to\mathbb{R}$ be a continuous bilinear form with
continuity constant $C_a>0$ and stability constant $\gamma>0$ (either coercive or inf--sup stable).
Let $\{\phi_{\omega}\}_{\omega\sim\pi}\subset V$ denote a family of fixed neural basis functions
parameterized by random variables $\omega$ drawn i.i.d.\ from a distribution $\pi$, and define the
$N_b$-dimensional neural basis space
\[
V_{N_b}:=\mathrm{span}\{\phi_{\omega_1},\ldots,\phi_{\omega_{N_b}}\}\subset V,
\qquad \omega_j \stackrel{\mathrm{i.i.d.}}{\sim}\pi,\ \ j=1,\ldots,N_b .
\]
Let $u\in V$ be the exact solution of
\[
a(u,v)=\ell(v),\qquad \forall v\in V,
\]
and let $u_{N_b}\in V_{N_b}$ denote the least-squares projection solution associated with the NBM
formulation. Assume that $u$ belongs to the reproducing kernel Hilbert space $\mathcal{H}_k$
induced by the random neural basis construction, and that $\mathcal{H}_k$ is continuously embedded in $V$,
\[
\|w\|_V \le C_{\mathrm{emb}}\|w\|_{\mathcal{H}_k},\qquad \forall w\in \mathcal{H}_k,
\]
where $C_{\mathrm{emb}}$ is the embedding constant of $\mathcal{H}_k\hookrightarrow V$.
Then, for any $\delta\in(0,1)$, there exists a constant $C_1(\delta)>0$ such that, with probability at least $1-\delta$,
\[
\|u-u_{N_b}\|_V
\le \frac{C_a}{\gamma}\left(
\inf_{w\in \mathcal{H}_k}\|u-w\|_V
+ C_{\mathrm{emb}}\,C_1(\delta)\,N_b^{-1/2}
\right).
\]
If, in addition, $u\in\mathcal{H}_k$ with $\|u\|_{\mathcal{H}_k}\le B$, then
\[
\|u-u_{N_b}\|_V
\le \frac{C_a}{\gamma}\,C_{\mathrm{emb}}\,C_1(\delta)\,B\,N_b^{-1/2}.
\]
\end{lemma}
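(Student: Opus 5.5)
The proof splits into two independent estimates: a deterministic quasi-optimality estimate for the projection $u_{N_b}$, and a probabilistic approximation estimate for the random space $V_{N_b}$. They are combined by a triangle inequality.

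\emph{Step 1 (Céa/Babuška step).} Treat the NBM least-squares projection as a (Petrov--)Galerkin projection onto $V_{N_b}$. In the coercive case, Galerkin orthogonality $a(u-u_{N_b},v)=0$ for all $v\in V_{N_b}$ together with continuity and coercivity gives $\gamma\|u-u_{N_b}\|_V^2\le a(u-u_{N_b},u-w)\le C_a\|u-u_{N_b}\|_V\|u-w\|_V$ for every $w\in V_{N_b}$. In the inf--sup case we assume the discrete inf--sup constant is inherited uniformly, i.e.\ the least-squares formulation supplies it through its normal equations. Either way we obtain
\[
\|u-u_{N_b}\|_V\le \frac{C_a}{\gamma}\inf_{w\in V_{N_b}}\|u-w\|_V .
\]
The constant $C_a/\gamma$ is exactly the one in the statement; we do not use the sharper $1+C_a/\gamma$. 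This requires that the minimization is posed in an operator-induced norm equivalent to $\|\cdot\|_V$, which I would state as part of the "standard continuity and stability assumptions."

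\emph{Step 2 (random-feature approximation).} Write $k(x,y)=\mathbb{E}_{\omega\sim\pi}[\phi_\omega(x)\phi_\omega(y)]$, so that $w\in\mathcal{H}_k$ admits a representation $w=\mathbb{E}_\omega[\beta(\omega)\phi_\omega]$ with $\|\beta\|_{L^2(\pi)}=\|w\|_{\mathcal{H}_k}$. Take the Monte Carlo element $w_{N_b}=N_b^{-1}\sum_j\beta(\omega_j)\phi_{\omega_j}\in V_{N_b}$. The summands $\xi_j=\beta(\omega_j)\phi_{\omega_j}-w$ are i.i.d., mean zero, and take values in the Hilbert space $\mathcal{H}_k$ (or in $V$). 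Then $\mathbb{E}\|w-w_{N_b}\|^2=N_b^{-1}\mathbb{E}\|\xi\|^2$. Markov's inequality gives a bound $\delta^{-1/2}N_b^{-1/2}$; under boundedness of $\beta\phi_\omega$, a Hilbert-space Bernstein/McDiarmid (Pinelis) inequality gives the sharper $\sqrt{\log(2/\delta)}$ dependence. Either way, with probability at least $1-\delta$,
\[
\|w-w_{N_b}\|_{\mathcal{H}_k}\le C_1(\delta)\|w\|_{\mathcal{H}_k}N_b^{-1/2},
\]
and the embedding converts this to the $V$-norm with the factor $C_{\mathrm{emb}}$.

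\emph{Step 3 (combination).} For arbitrary $w\in\mathcal{H}_k$,
\[
\inf_{V_{N_b}}\|u-\cdot\|_V\le\|u-w\|_V+\|w-w_{N_b}\|_V .
\]
For the first estimate of the lemma the $\|w\|_{\mathcal{H}_k}$ factor must be absorbed into $C_1(\delta)$. To make this legitimate, restrict to $w$ in a fixed ball of radius $B$, or let $C_1(\delta)$ depend on $u$ through a near-minimizer $w$ fixed \emph{before} sampling. That choice is essential, since the probability event must not depend on $w$. I would therefore choose $w^\ast$ attaining the infimum up to $\varepsilon$, apply Step 2 to this single deterministic element, and let $\varepsilon\to0$; this affects only the constant. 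The second estimate then follows by taking $w=u$: the first term vanishes and $\|u\|_{\mathcal{H}_k}\le B$, giving $C_a\gamma^{-1}C_{\mathrm{emb}}C_1(\delta)BN_b^{-1/2}$.

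The main obstacle is the bookkeeping in Step 3. The high-probability event must be fixed before taking the infimum, and the $\|w\|_{\mathcal{H}_k}$ dependence has to be hidden in $C_1(\delta)$ in a way that is honest about what the constant depends on. The other delicate point is justifying uniform discrete stability for the inf--sup case in Step 1. I would handle it by noting that least-squares formulations induce symmetric coercive forms, so the coercive argument applies.
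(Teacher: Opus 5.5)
Your outline is the same as the paper's: C\'ea, a random-feature approximant of $u$ controlled in $\mathcal H_k$, the embedding into $V$, then the infimum. Your insistence in Step 3 on fixing the comparison element before sampling is more careful than the paper's ``the same argument yields''.

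The genuine gap is Step 2, and it is exactly the step the paper delegates to Rahimi--Recht/Bach: the Monte Carlo error is \emph{not} controlled in the $\mathcal H_k$ norm. With $k(x,y)=\mathbb{E}_\omega[\phi_\omega(x)\phi_\omega(y)]$, the feature $\phi_\omega$ is a random function with covariance $k$. Suppose $\phi_\omega\in\mathcal H_k$. The reproducing property gives $\mathbb{E}\langle\phi_\omega,e\rangle_{\mathcal H_k}^2=\|e\|_{\mathcal H_k}^2$ for $e=\sum_m a_m k(\cdot,x_m)$, hence for every $e\in\mathcal H_k$ by density. Bessel's inequality over any $n$ orthonormal elements then yields $\mathbb{E}\|\phi_\omega\|_{\mathcal H_k}^2\ge n$. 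So $\mathbb{E}\|\phi_\omega\|_{\mathcal H_k}^2=\dim\mathcal H_k$, which is infinite whenever $\mathcal H_k$ is infinite-dimensional, the only case where an $N_b^{-1/2}$ rate means anything. (For random Fourier features of a Gaussian kernel, no feature lies in $\mathcal H_k$ at all.)

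Consequently $\mathbb{E}\|\beta(\omega)\phi_\omega\|_{\mathcal H_k}^2$ is not bounded by any multiple of $\|\beta\|_{L^2(\pi)}^2=\|w\|_{\mathcal H_k}^2$; it is already infinite for $\beta\equiv1$. Neither Chebyshev nor Pinelis can then give $\|w-w_{N_b}\|_{\mathcal H_k}\le C_1(\delta)\|w\|_{\mathcal H_k}N_b^{-1/2}$. The cited random-feature bounds are $L^2(\rho)$ or sup-norm statements, so the route ``bound in $\mathcal H_k$, then embed into $V$'' is not available.

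The repair is the alternative in your parenthetical: run the Monte Carlo argument directly in $V$. This needs a hypothesis the lemma does not state, for example $\sup_\omega\|\phi_\omega\|_V\le K$, or at least $\mathbb{E}_\omega[\beta(\omega)^2\|\phi_\omega\|_V^2]<\infty$ for the minimal-norm $\beta$ representing $u$.
\begin{itemize}
\item Under the uniform bound, $\mathbb{E}\|w-w_{N_b}\|_V^2\le N_b^{-1}K^2\|w\|_{\mathcal H_k}^2$.
\item So with probability at least $1-\delta$, $\|w-w_{N_b}\|_V\le\delta^{-1/2}K\|w\|_{\mathcal H_k}N_b^{-1/2}$.
\item Since $\|\mathbb{E}_\omega[\beta(\omega)\phi_\omega]\|_V\le K\|\beta\|_{L^1(\pi)}$ forces $C_{\mathrm{emb}}\le K$, the lemma's displayed form follows with $C_1(\delta)=\delta^{-1/2}K/C_{\mathrm{emb}}$.
\end{itemize}
Note that it is the uniform $V$-bound on the features, not the embedding, that does the work.

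Two smaller points.
\begin{itemize}
\item In Step 1, a least-squares (minimal-residual) method needs no discrete inf--sup condition. If $\gamma\|v\|_V\le\|\mathcal L v\|_*\le C_a\|v\|_V$ in the residual norm, then $\gamma\|u-u_{N_b}\|_V\le\|\mathcal L(u-u_{N_b})\|_*=\min_{w\in V_{N_b}}\|\mathcal L(u-w)\|_*\le C_a\inf_{w\in V_{N_b}}\|u-w\|_V$. This gives $C_a/\gamma$ directly. Your coercive Galerkin argument applied to the least-squares form $(\mathcal L u,\mathcal L v)_*$ would only give $(C_a/\gamma)^2$.
\item Since $u\in\mathcal H_k$ is assumed, the infimum in the first bound vanishes (take $w=u$), so no $\varepsilon\to0$ limit is needed. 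That limit would not be legitimate in general anyway, because $\|w^\ast_\varepsilon\|_{\mathcal H_k}$, and with it your constant, may blow up.
\end{itemize}
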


\begin{proof}
By the Céa estimate associated with the Galerkin or
least-squares projection, the NBM solution $u_{N_b}\in V_{N_b}$ satisfies
\begin{equation}
\|u-u_{N_b}\|_V
\;\le\;
\frac{C_a}{\gamma}\inf_{w\in V_{N_b}}\|u-w\|_V .
\label{eq:cea}
\end{equation}
Since $u\in\mathcal H_k$, classical results on Monte--Carlo approximation of
kernel expansions (e.g., Rahimi--Recht or Bach-type bounds) imply that, for any
$\delta\in(0,1)$, with probability at least $1-\delta$, there exists a function
$v_{N_b}\in V_{N_b}$ such that
\begin{equation}
\|u-v_{N_b}\|_{\mathcal H_k}
\;\le\;
C_1(\delta)\,\|u\|_{\mathcal H_k}\,N_b^{-1/2}.
\label{eq:mc}
\end{equation}
Here $v_{N_b}$ is a comparison function in $V_{N_b}$ whose existence is guaranteed by
random feature approximation theory and $C_1(\delta)$ is a constant depends on $\delta$ but is independent of $N_b$.
Using the continuous embedding $\mathcal H_k\hookrightarrow V$, we obtain
\begin{equation}
\|u-v_{N_b}\|_V
\;\le\;
C_{\mathrm{emb}}\,\|u-v_{N_b}\|_{\mathcal H_k}.
\label{eq:embedding}
\end{equation}
Combining \eqref{eq:mc} and \eqref{eq:embedding} yields
\begin{equation}
\|u-v_{N_b}\|_V
\;\le\;
C_{\mathrm{emb}}\,C_1(\delta)\,\|u\|_{\mathcal H_k}\,N_b^{-1/2}.
\label{eq:vnV}
\end{equation}
Since $v_{N_b}\in V_{N_b}$, by definition of the infimum,
\begin{equation}
\inf_{w\in V_{N_b}}\|u-w\|_V
\;\le\;
\|u-v_{N_b}\|_V .
\label{eq:inf}
\end{equation}
Substituting \eqref{eq:inf} into the C\'ea's estimate \eqref{eq:cea} and using
\eqref{eq:vnV} gives
\[
\|u-u_{N_b}\|_V
\;\le\;
\frac{C_a}{\gamma}\,
C_{\mathrm{emb}}\,C_1(\delta)\,\|u\|_{\mathcal H_k}\,N_b^{-1/2}.
\]
More generally, without assuming a bound on $\|u\|_{\mathcal H_k}$, the same
argument yields
\[
\|u-u_{N_b}\|_V
\;\le\;
\frac{C_a}{\gamma}
\left(
\inf_{w\in \mathcal H_k}\|u-w\|_V
+
C_{\mathrm{emb}}\,C_1(\delta)\,N_b^{-1/2}
\right).
\]
\end{proof}
\vspace{-2.ex} 
The above bound does not rely on symmetry of the operator.
For the transport equation, the bilinear form
$a(\cdot,\cdot)$ is generally non-symmetric but satisfies an inf--sup
condition under appropriate least-squares norms.
In such cases, the constant $\gamma$ represents the corresponding
inf--sup stability constant.
Since the NBM formulation employs a deterministic least-squares realization, which can be interpreted as a Petrov--Galerkin
projection, the same Céa-type estimate applies.
\vspace{-1.5ex}
\subsection{Mixed least-squares with energy-consistent weights for multiscale Darcy flow} 
\vspace{-1.5ex}
Discrete neural gradient and divergence operators are generally not adjoint. Consequently, a pressure-only approximation that eliminates $\mathbf{q}$ through \eqref{eq:momentum} and then recovers it a posteriori does not, in general, preserve local mass conservation when inserted into \eqref{eq:mass}. We therefore devise a mixed formulation and solve $\boldsymbol{\theta}_p$ and $\boldsymbol{\theta}_q$ simultaneously using least-squares for each update by
\begin{equation}
\begin{aligned}
\boldsymbol{\theta}_{p,q}^*
=
\arg\min_{\boldsymbol{\theta}_{p,q}}
\Bigg\{
&
\left\|
\mathbf{q}_{NN}
+
\rho\,\frac{\kappa}{\mu}\,\nabla p_{NN}
\right\|_{\Omega}^{2}
+
\left\|
\frac{\partial(\varepsilon\rho(p_{NN}))}{\partial t}
+
\nabla\cdot\mathbf{q}_{NN}
\right\|_{\Omega}^{2}
\\[4pt]
&
+
\left\|
p_{NN} - p_D
\right\|_{\partial\Omega_p}^{2}
+
\left\|
\mathbf{q}_{NN}\cdot\mathbf{n} - q_N
\right\|_{\partial\Omega_q}^{2}
\Bigg\}.
\end{aligned}
\label{eq:nbm_mixed_darcy}
\end{equation}
Here $\boldsymbol{\theta}_{p,q}$ denotes $(\boldsymbol{\theta}_p, \boldsymbol{\theta}_q)$, $p_{NN}=\sum_i\theta_{p,i}\,\phi_{p,i}$ and $\mathbf{q}_{NN}=\sum_i\theta_{q,i}\,\phi_{q,i}$, a compact form of \eqref{eq:vector_basis}.
This mixed projection jointly enforces the constitutive law, the continuity constraint, and the boundary conditions, and produces locally conservative fluxes without a posterior correction. 
Additionally, $\|\cdot\|_{\Omega/\partial\Omega}$ denotes a generic norm defined in $\Omega$ or on $\partial\Omega$, and its construction hinges on how we measure the interior and boundary residuals. 

Unweighted $L^2$ norms do not respect the Darcian energy scaling, which can degrade accuracy in multiscale media; this motivates energy-consistent weighting.
Accordingly, we place all residual terms on a common Darcian energy scale: we weight the constitutive residual to match the dissipative metric
$\int_{\Omega}\frac{\mu}{\kappa}\,|\mathbf{u}|^2\,d\Omega$,
and scale the continuity and boundary residuals to the same metric.
Under collocation realization, this yields:
\begin{equation}
\begin{aligned}
\boldsymbol{\theta}_{p,q}^*
=
\arg\min_{\boldsymbol{\theta}_{p,q}}
\Bigg\{
&
\left\|
w_m\,
\Big(
\mathbf{q}_{NN}
+
\rho\,\frac{\kappa}{\mu}\,\nabla p_{NN}
\Big)
\right\|_{L^2(\Omega)}^{2}
+
\,\, \left\|
w_c\,
\Big(
\frac{\partial(\varepsilon\rho(p_{NN}))}{\partial t}
+
\nabla\cdot\mathbf{q}_{NN}
\Big)
\right\|_{L^2(\Omega)}^{2}
\\[4pt]
&
+
\left\|
w_D\,
\big(
p_{NN} - p_D
\big)
\right\|_{L^2(\partial\Omega_p)}^{2}
\,\, +
\,\, \left\|
w_N\,
\big(
\mathbf{q}_{NN}\cdot\mathbf{n} - q_N
\big)
\right\|_{L^2(\partial\Omega_q)}^{2}
\Bigg\},
\end{aligned}
\label{eq:nbm_mixed_darcy_weighting}
\end{equation}
with
\begin{equation}
w_m = \frac{1}{\rho}\sqrt{\frac{\mu}{\kappa}},
\qquad
w_c = \frac{\ell}{\rho}\sqrt{\frac{\mu}{\kappa_{\mathrm{mean}}}},
\qquad
w_D = \sqrt{\frac{1}{h}}\sqrt{\frac{\kappa_{\mathrm{mean}}}{\mu}},
\qquad
w_N = \frac{\ell}{\rho_{\mathrm{0}}}\sqrt{\frac{1}{h}}\sqrt{\frac{\mu}{\kappa_{\mathrm{mean}}}}.\label{eq:ls-weight}
\end{equation}
In $w_c$ and $w_N$, $\ell$ is a characteristic length that ensures dimensional consistency relative to the constitutive residual. 
The weights $w_D$ and $w_N$ balance the pressure and mass-flux boundary residuals against the interior energy scaling with a measure correction $\sqrt{1/h}$ to compensate for the codimension-one boundary sampling in collocation. Here $h$ is the collocation spacing of the residual evaluation over the physical domain.
$\kappa_{\mathrm{mean}}$ is the mean permeability and $\rho_{0}$ is the reference density. These weights measure all terms in compatible weighted $L^2$ norms and yield a consistent, physically interpretable projection. 

In implementation, each residual evaluation at a collocation point corresponds to one row of the discrete weighted least-squares system. 
For interior points $x_i\in\Omega$ with associated volume $|\Omega_i|$, we use
\(
w_m(x_j)=1/\rho \, \sqrt{\mu/\kappa(x_j)}\,\sqrt{|\Omega_j|}
\)
for the constitutive residual and
\(
w_c(x_j)=\ell/\rho\,\sqrt{\mu/\kappa_\mathrm{mean}}\,\sqrt{|\Omega_j|}
\)
for the continuity residual.
For boundary points $x_b\in\partial\Omega$ with associated length or area $|\partial\Omega_b|$, we set
\(
w_D(x_b)=\sqrt{1/h}\,\sqrt{\kappa_{\mathrm{mean}}/\mu}\,\sqrt{|\partial\Omega_b|}
\)
and
\(
w_N(x_b)=\ell/\rho_{ref}\,\sqrt{1/h}\,\sqrt{\mu/\kappa_{\mathrm{ref}}}\,\sqrt{|\partial\Omega_b|}.
\) 
Accordingly, the discrete residual vector takes the stacked form
\[
\mathbf{r}(\boldsymbol{\theta})
=
\begin{bmatrix}
\begin{array}{l}
\mathbf{r}_m\\
\mathbf{r}_c\\
\mathbf{r}_D\\
\mathbf{r}_N
\end{array}
\end{bmatrix}
=
\begin{bmatrix}
\begin{array}{l}
\mathbf{A}_m\\
\mathbf{A}_c\\
\mathbf{A}_D\\
\mathbf{A}_N
\end{array}
\end{bmatrix}
\boldsymbol{\theta}
-
\begin{bmatrix}
\begin{array}{l}
\mathbf{b}_m\\
\mathbf{b}_c\\
\mathbf{b}_D\\
\mathbf{b}_N
\end{array}
\end{bmatrix},
\qquad
\boldsymbol{\theta}
=
\begin{bmatrix}
\begin{array}{l}
\boldsymbol{\theta}^{(p)}\\
\boldsymbol{\theta}^{(\mathrm{div})}\\
\boldsymbol{\theta}^{(\mathrm{cur})}
\end{array}
\end{bmatrix},
\]
where $\mathbf{r}_m\in\mathbb{R}^{M_\Omega}$ and $\mathbf{r}_c\in\mathbb{R}^{M_\Omega}$ collect the constitutive and continuity residuals at interior collocation points, and $\mathbf{r}_D\in\mathbb{R}^{M_D}$ and $\mathbf{r}_N\in\mathbb{R}^{M_N}$ collect the Dirichlet and Neumann residuals at boundary collocation points. The corresponding discrete system has a block structure
\[
\underbrace{\begin{bmatrix}
\mathbf{A}_m^{(p)} & \mathbf{A}_m^{(\mathrm{div})} & \mathbf{A}_m^{(\mathrm{cur})} \\
\mathbf{A}_c^{(p)} & \mathbf{0}                   & \mathbf{A}_c^{(\mathrm{cur})} \\
\mathbf{A}_D^{(p)} & \mathbf{0}                   & \mathbf{0} \\
\mathbf{0}         & \mathbf{A}_N^{(\mathrm{div})} & \mathbf{A}_N^{(\mathrm{cur})}
\end{bmatrix}}_{\mathbf{A}_{\mathrm{flow}}\in\mathbb{R}^{M_{\mathrm{tot}}\times N_\theta}}
\begin{bmatrix}
\begin{array}{l}
\boldsymbol{\theta}^{(p)}\\
\boldsymbol{\theta}^{(\mathrm{div})}\\
\boldsymbol{\theta}^{(\mathrm{cur})}
\end{array}
\end{bmatrix}
-
\underbrace{\begin{bmatrix}
\begin{array}{l}
\mathbf{b}_m\\
\mathbf{b}_c\\
\mathbf{b}_D\\
\mathbf{b}_N
\end{array}
\end{bmatrix}}_{\mathbf{b}_{\mathrm{flow}}\in\mathbb{R}^{M_{\mathrm{tot}}}},
\qquad
M_{\mathrm{tot}}=2M_\Omega+M_D+M_N,
\]
where each row of $\mathbf{A}_m,\mathbf{A}_c,\mathbf{A}_D,\mathbf{A}_N$ is obtained by evaluating the bases and their derivatives at the corresponding collocation point and inserting them into the constitutive, continuity, and boundary residual definitions. Finally, we have the following diagonal weighting matrix
\[
\mathbf{W}
=
\mathrm{diag}\!\Big(
\mathbf{w}_m,\mathbf{w}_c,\mathbf{w}_D,\mathbf{w}_N
\Big)
\in\mathbb{R}^{M_{\mathrm{tot}}\times M_{\mathrm{tot}}},
\]
where $\mathbf{w}_m$ and $\mathbf{w}_c$ collect the interior weights $w_m(x_j)$ and $w_c(x_j)$, while $\mathbf{w}_D$ and $\mathbf{w}_N$ collect the Dirichlet and Neumann boundary weights $w_D(x_b)$ and $w_N(x_b)$. We therefore write
\begin{equation}
\boldsymbol{\theta}_{p,q}^*
= \arg\min_{\boldsymbol{\theta}_{p,q}}
\left\|
\mathbf{W}
\bigl(
\mathbf{A}_{\mathrm{flow}}\,\boldsymbol{\theta}_{p,q}
-
\mathbf{b}_{\mathrm{flow}}
\bigr)
\right\|_2^2. \label{eq:nbm_mixed_darcy_weighting_compact}
\end{equation}
We then define a residual measure in a relative sense as $\mathcal{E}_{\mathrm{rel}}=\big\|\mathbf{W}
\bigl( \mathbf{A}_\mathrm{flow}\,\boldsymbol{\theta}_{p,q} - \mathbf{b}_\mathrm{flow} \bigr)\big\|_2^2/{\big\|\mathbf{b}_\mathrm{flow} \big\|_2^2}$.
\vspace{-1.5ex}
\subsection{Upwind control-volume stabilization for transport}
\vspace{-1.5ex}
For transport, we treat the velocity field $\mathbf{u}_{NN}$ as known and hold it frozen over each Darcy-flow update. 
To stabilize advection, we enforce \eqref{eq:transport} using a control-volume collocation with first-order upwinding. 
This gives the following discrete balance for each control-volume $K$
\begin{equation}
\varepsilon\,
\frac{c_{NN,K}^{m+1}-c_{NN,K}^{m}}{\Delta t}\,|K|
\;+\;
\sum_{f\subset\partial K}
(\mathbf{u}_{NN}\cdot\mathbf{n})_f\,c_{NN,f}^{m+1, \mathrm{up}}\,|f|
=
0,
\label{eq:transport_fvm}
\end{equation}
where $c_{NN}=\sum_i \theta_{c,i}\,\phi_{c,i}$, $|K|$ denotes the measure of the cell $K$, $|f|$ denotes the measure of a face $f\subset\partial K$, and $m$ is the time index for transport update.
The sign of $(\mathbf{u}_{NN}\cdot\mathbf{n})_f$ selects the upwind state and defines $c_{NN,f}^{m+1,\mathrm{up}}$.
We assemble each face flux by evaluating the neural basis $\phi_{c,i}$ in the upwind cell.
The inflow boundary concentration then enters \eqref{eq:transport_fvm} naturally as the corresponding upwind state, 
eliminating explicit boundary residuals.
This construction yields a linear least-squares solve for each concentration update.
By inheriting the stability of classical upwinding finite-volume schemes while retaining a global low-dimensional neural representation, it effectively suppresses the spurious oscillations that global bases often introduce in advective regimes. 
Similarly, we write its compact form under collocation
\begin{equation}
\boldsymbol{\theta}_{c}^*
= \arg\min_{\boldsymbol{\theta}_{c}}
\left\|
\mathbf{A}_{\mathrm{up}}\,\boldsymbol{\theta}_{c}
-
\mathbf{b}_{\mathrm{up}}
\right\|_2^2, \label{eq:nbm_transport_compact}
\end{equation}
where $\mathbf{A}_{\mathrm{up}}$ and $\mathbf{b}_{\mathrm{up}}$ are assembled by applying the upwind control-volume balance \eqref{eq:transport_fvm} to each cell and its residual measure is thus $\mathcal{E}_{\mathrm{rel}}=\big\| \mathbf{A}_\mathrm{up}\,\boldsymbol{\theta}_c - \mathbf{b}_\mathrm{up} \big\|_2^2/{\big\|\mathbf{b}_\mathrm{up} \big\|_2^2}$. 
This construction defines an advection-aware residual metric and can be interpreted as an implicit, generalized weighting, analogous in spirit to the energy-consistent weighting used for Darcy flow.
\vspace{-1.5ex}
\subsection{Implicit time integration and nonlinearity treatment.}
\vspace{-1.5ex}
For Darcy flow, given $p_{NN}^n$ at time $T^n$, we advance one Darcy time step $\Delta T$ to $T^{n+1}$ by solving \eqref{eq:nbm_mixed_darcy_weighting_compact} with implicit time integration. For the nonlinearity induced by pressure-dependent density in \eqref{eq:momentum}, we choose Picard linearization. At Picard iterate $k$, we update the coefficients by
\begin{equation}
\boldsymbol{\theta}_{p,q}^{k+1}
=
\arg\min_{\boldsymbol{\theta}_{p,q}}
\left\|
\mathbf{W}
\bigl(
\mathbf{A}_{\mathrm{flow}}^{k}\,\boldsymbol{\theta}_{p,q}
-
\mathbf{b}_{\mathrm{flow}}^{k}
\bigr)
\right\|_2^2 ,
\label{eq:nbm_darcy_picard}
\end{equation}
where $\mathbf{A}_{\mathrm{flow}}^{k}$ and $\mathbf{b}_{\mathrm{flow}}^{k}$ are evaluated using the previous iterate $p_{NN}^{k}$. With
$p_{NN}^{k+1}=\sum \theta_{p,i}^{k+1}\phi_{p,i}$ and
$\mathbf{q}_{NN}^{k+1}=\sum \theta_{q,i}^{k+1}\boldsymbol{\phi}_{q,i}$,
we iterate until convergence and obtain
$(p_{NN}^{n+1},\mathbf{q}_{NN}^{n+1})$ at $T^{n+1}$.
At the end of each Darcy step, we then advance transport from $T^n$ to $T^{n+1}$ with frozen velocity
$\mathbf{u}^{n+1}=\mathbf{q}_{NN}^{n+1}/\rho_{NN}^{n+1}$.
We use implicit time integration to overcome CFL restriction. 
However, to better resolve the advective front propagation, we take multiple transport substeps within $[T^n,T^{n+1}]$ because concentration generally evolves on much smaller time scale than Darcy flow. 
Consequently, for each transport substep $t^{m}\!\to t^{m+1}$ in $[T^n,T^{n+1}]$, we formulate \eqref{eq:nbm_transport_compact} as
\begin{equation}
\boldsymbol{\theta}_c^{\,m+1}
=
\arg\min_{\boldsymbol{\theta}_c}
\;
\big\|
\mathbf{A}_{\mathrm{up}}\,\boldsymbol{\theta}_c
-
\mathbf{b}_{\mathrm{up}}^{m}
\big\|_2^2 ,
\label{eq:nbm_transport_upwind}
\end{equation}
where $\mathbf{A}_{\mathrm{up}}$ is fixed, while $\mathbf{b}_{\mathrm{up}}^{\,m}$ collects the previous-step result together with the boundary inflow condition. Then, when $t^{m+1}=T^{n+1}$, we set
$c^{n+1}=c^{m+1}=\sum \theta_{c,i}^{\,m+1}\phi_{c,i}$.

\section{\textbf{Performance of NBM on single-query problems}}
\vspace{-1.5ex}
In this section, we assess the performance of NBM on representative single-query Darcy--transport problems. We focus on solution accuracy, residual decay, basis refinement behavior, and robustness under increasing multiscale complexity. We begin with a homogeneous compressible benchmark to illustrate the basic approximation and convergence behavior of NBM, and then move to challenging multiscale permeability fields to examine the role of energy-consistent weighting and the robustness of the method in heterogeneous regimes.
\vspace{-1.5ex}
\subsection{NBM produces advective Darcian solutions with spectral-like accuracy}
\vspace{-1.5ex}
For a homogeneous compressible Darcy flow--transport problem based on supercritical CO$_2$ storage (Fig.~\ref{fig:fig2}a), the residual $\mathcal{E}_{\mathrm{rel}}$ decays in two near-linear stages on semi-log scale as the neural basis size $N_b$ increases: a rapid drop that resolves dominant structures, followed by a milder slope as higher-frequency features become harder to represent with a finite basis (Fig.~\ref{fig:fig2}b).
Consistent with this behavior, the relative $L_2$ errors of pressure, velocity, and concentration decrease rapidly and remain competitive with FVM across all variables (Fig.~\ref{fig:fig2}c).
The velocity statistics also converge in a spectral-like manner, as quantified by the Kolmogorov--Smirnov distance, and the qualitative fields show systematic refinement with increasing $N_b$ (Fig.~\ref{fig:fig2}d--e).
For fieldwise errors in pressure, velocity, and concentration, we compare NBM against a finite-volume method (FVM) and a vanilla PINN. NBM attains a $0.001\%$ relative $L_2$ error in pressure, about $50\%$ lower than FVM ($0.002\%$). For velocity, NBM yields $0.16\%$ for $u_x$ (FVM: $0.14\%$) and $0.30\%$ for $u_y$ (FVM: $0.33\%$), whereas the vanilla PINN underperforms ($2.65\%$ for $u_x$ and $4.43\%$ for $u_y$). For concentration, NBM ($5.17\%$) closely matches FVM ($5.36\%$), while the PINN degrades severely ($100\%$) (Fig.~\ref{fig:fig2}f–i). The experiment details of this example are provided in Appendix~\ref{Appx:F}.
\begin{figure}[ht!]
  \centering
  \includegraphics[width=\linewidth]{figs/fig2.jpg}
  \caption{%
    \bfseries NBM performance on a homogeneous compressible benchmark.\,
    \bfseries a,
    \normalfont Homogeneous CO$_2$ storage model. 
    \bfseries b,
    \normalfont Residual $\mathcal{E}_{\mathrm{rel}}$ vs. neural basis size $N_b$.
    \bfseries c,
    \normalfont Relative $L_2$ errors vs. $N_b$.
    \bfseries d,
    \normalfont Kolmogorov--Smirnov distance vs. $N_b$. 
    \bfseries e,
    \normalfont Refinement of pressure and concentration solutions with $N_b$.
    \bfseries f,
    \normalfont Left to right: ground-truth pressure, followed by absolute error fields (w.r.t.\ ground truth) for NBM, FVM, and vanilla PINN.
    \bfseries g,
    \normalfont Left to right: ground-truth velocity magnitude, followed by absolute error fields (w.r.t.\ ground truth) for NBM, FVM, and vanilla PINN.
    \bfseries h,
    \normalfont Left to right: ground-truth concentration, followed by absolute error fields (w.r.t.\ ground truth) for NBM, FVM, and vanilla PINN.
    \bfseries i,
    \normalfont Relative $L_2$ errors.
    \bfseries Note:\normalfont\ All results are at 90 days. In f--i, NBM uses $N_b=1000$, and all methods (NBM, FVM, and vanilla PINN) employ a $50\times 50$ spatial resolution. Pressure is reported in $\mathrm{bar}$, velocity in $\mathrm{m\,day^{-1}}$, and concentration in $\mathrm{ppm}$.
  }
  \label{fig:fig2}
\end{figure}
\vspace{-15pt}

\begin{figure}[!hb]
  \centering
  \includegraphics[width=\linewidth]{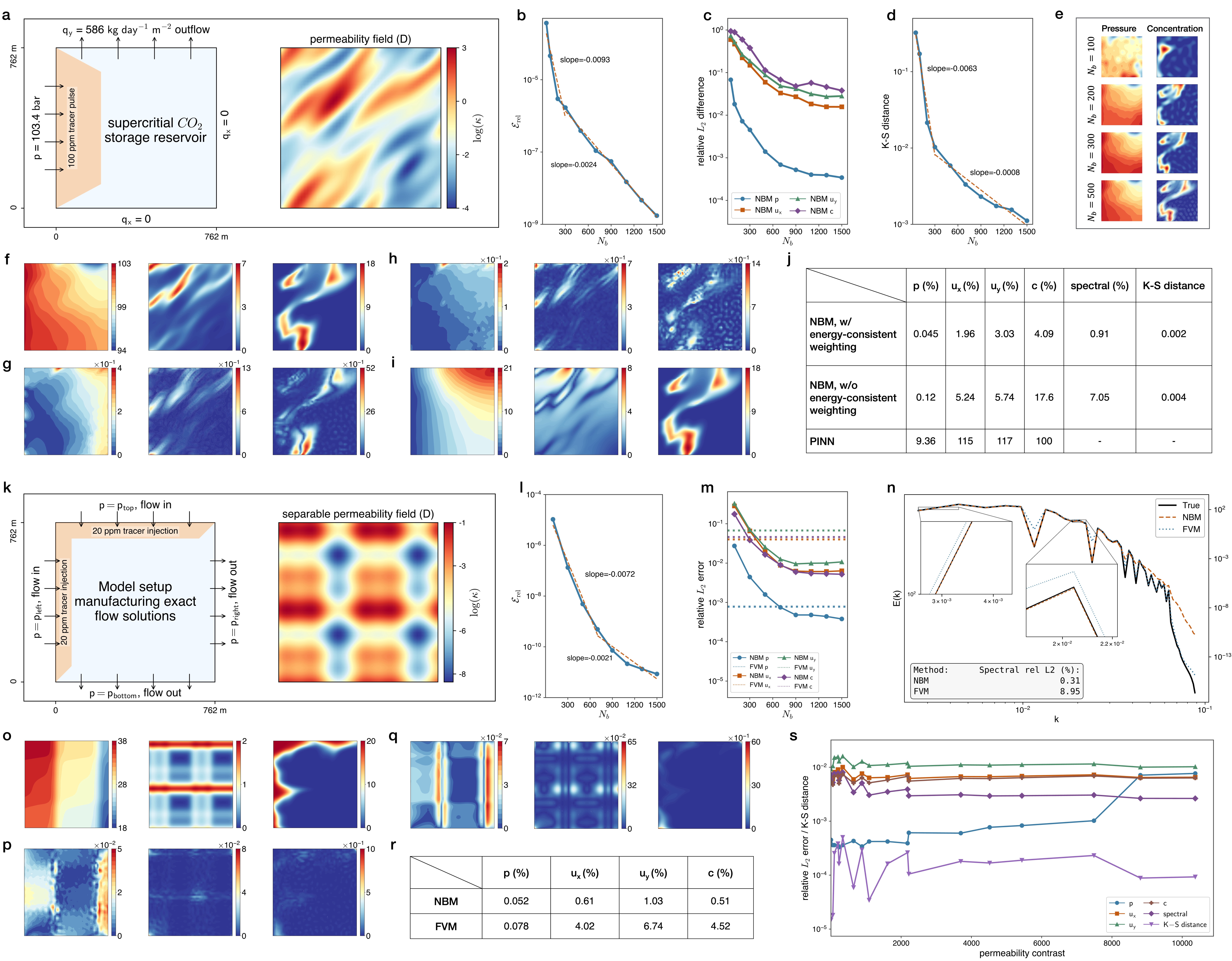}
    \caption{%
    \bfseries NBM performance on multiscale benchmarks.\,
    \bfseries a, \normalfont Multiscale CO$_2$ storage model.  (permeability contrast $502$).\, 
    \bfseries b, \normalfont  Residual $\mathcal{E}_{\mathrm{rel}}$ vs. neural basis size $N_b$.\,
    \bfseries c, \normalfont Relative $L_2$ difference vs. $N_b$.\,
    \bfseries d, \normalfont K-S distance vs. $N_b$.\,
    \bfseries e, \normalfont Refinement of NBM pressure and concentration fields with increasing $N_b$.\,
    \bfseries f, \normalfont Left to right: baseline pressure, velocity magnitude, and concentration.\, 
    \bfseries g, \normalfont Left to right: absolute differences of NBM with energy-consistent weighting, for pressure, velocity magnitude, and concentration.\,
    \bfseries h, \normalfont Left to right: absolute differences of NBM without energy-consistent weighting, for pressure, velocity magnitude, and concentration.\,
    \bfseries i, \normalfont Left to right: absolute differences of PINN, for pressure, velocity magnitude, and concentration.s.\,
    \bfseries j, \normalfont Summary metrics measured against the baseline.\,
    \bfseries k, \normalfont Manufactured-solution setup (permeability contrast 1078).\,
    \bfseries l, \normalfont Residual $\mathcal{E}_{\mathrm{rel}}$ vs. $N_b$.\,
    \bfseries m, \normalfont Relative $L_2$ error vs. $N_b$.\,
    \bfseries n, \normalfont Energy spectrum $E(k)$ comparison.\,
    \bfseries o, \normalfont Ground-truth pressure, velocity magnitude, and concentration fields.\,
    \bfseries p, \normalfont Left to right: absolute errors of NBM, for pressure, velocity magnitude, concentration.\,
    \bfseries q, \normalfont Left to right: absolute errors of FVM, for pressure, velocity magnitude, concentration.\,
    \bfseries r, \normalfont Relative $L_2$ errors.\,
    \bfseries s, \normalfont Comparing metrics vs. permeability contrast.
    \bfseries Note:
    \normalfont\ Results in b--j are at 90 days. In f--j and o-s, NBM uses $N_b=1000$, and all methods (NBM, FVM, and vanilla PINN) employ a $50\times 50$ spatial resolution. Pressure is reported in $\mathrm{bar}$, velocity in $\mathrm{m\,day^{-1}}$, and concentration in $\mathrm{ppm}$. 
    }
  \label{fig:fig3}
\end{figure}
\vspace{-15pt}
\vspace{1.ex}
\subsection{NBM remains accurate and robust for multiscale permeability fields}
\vspace{-1.5ex}
For this Darcy--transport problem but under a challenging multiscale permeability field (Fig.~\ref{fig:fig3}a), increasing the neural basis size $N_b$ yields similar spectral-like residual decay together with systematic basis refinement (Fig.~\ref{fig:fig3}b--e).
Using the FVM solution as a baseline, fieldwise comparisons show that energy-consistent weighting is critical in the multiscale regime: without weighting, the flow-field energy-spectrum discrepancy $E(k)$ increases from $0.91\%$ to $7.05\%$ ($7.7\times$ increase), and the transport solution driven by this flow field deteriorates sharply, with the concentration relative $L_2$ difference rising from $4.09\%$ to $17.6\%$ ($4.3\times$) (Fig.~\ref{fig:fig3}f--j).
By contrast, a vanilla PINN fails to produce meaningful solutions.
Because a ground-truth is unavailable for this discrete-valued permeability field, the FVM solution is used only as a comparison baseline; discrepancies relative to it should not be interpreted as inaccuracy of NBM.
We therefore include a manufactured-solution benchmark with a separable multiscale permeability to assess multiscale accuracy and robustness.
NBM exhibits the same decay trend while converging to more accurate solutions than FVM (Fig.~\ref{fig:fig3}l--m), and the energy spectra confirm that it matches the dominant low-frequency mode of the true spectrum with much smaller spectral error ($0.31\%$ vs $8.95\%$) (Fig.~\ref{fig:fig3}n).
NBM's accuracy is also reflected in fieldwise errors: the relative $L_2$ errors drop from $0.078\%/4.02\%/6.74\%/4.52\%$ (FVM) to $0.052\%/0.61\%/1.03\%/0.51\%$ for $(p,u_x,u_y,c)$ (Fig.~\ref{fig:fig3}o--r).
Moreover, robustness to increasing permeability contrast is demonstrated up to $10^4$, with variable-wise and spectral relative $L_2$ errors and the K--S distance remaining controlled across the tested contrasts (Fig.~\ref{fig:fig3}s).
The experiment details of this example are provided in Appendices~\ref{Appx:F}, \ref{Appx:G} and \ref{Appx:I}.

\section{NBM operator learning for parametric Darcy flow--transport}
\vspace{-1.5ex}
The results above establish that NBM is accurate and robust, but single-instance NBM solver requires dense least-squares solutions. 
With $N_b$ bases and $M$ residual evaluations, the computational cost scales as $\mathcal{O}(M N_b^2)$, so NBM is not intended to compete with classical solvers in single-query settings. 
The situation changes in many-query regimes with parametric variations in PDE coefficients and boundary conditions, where repeated linear solves become the bottleneck in engineering design, control, and uncertainty analysis. 
NBM--OL addresses this by learning, offline, the parametric dependence of the solution coefficients in the predefined neural basis space and then evaluating them online at negligible cost.

We use NBM--OL to learn trajectory operators for the parametric system \eqref{eq:parametric_system}. Parametric encodings, together with time when relevant, are mapped to the neural basis coefficients, as summarized in Fig.~\ref{fig:fig4}. 
The parametric inputs include permeabilities $\boldsymbol{\xi}_{\kappa}$ and boundary values $\boldsymbol{\xi}_{b}$. 
To regularize learning and reduce dimensionality, we represent these inputs with low-dimensional encodings, as is standard in parametric PDE modeling \cite{quarteroni2015reduced}; for permeability, we use coarse blockwise parameters commonly adopted in geostatistical studies \cite{gavalas1976reservoir,oliver2011recent}. 
The learned map predicts the neural basis coefficients, interpreted as the reduced coordinates of the solution in the NBM space. 
If additional compression is
beneficial, we can apply proper orthogonal decompositio (POD) to the solution coefficient
trajectories to further reduce the operator output dimension. 
The training process can be fully self-supervised,
requiring no training data and thus avoiding dependence on expensive simulation databases.
In addition, it minimizes the same residuals \eqref{eq:nbm_darcy_picard} and \eqref{eq:nbm_transport_upwind} used by the single-instance NBM solver for flow and transport. 
Notably, each residual provides an interpretable certificate for monitoring, diagnosis, and improving the learning process.
Next, we first establish the respective equivalence relations with the solution errors using Lemmas~\ref{lem:darcy_equivalence} and~\ref{lem:transport_equivalence} and then show that NBM--OL breaks the fundamental bottleneck of repeated parametric solves in many-query settings.
\vspace{-1.5ex}
\begin{figure}[!ht]
    \centering
    \includegraphics[width=1\linewidth]{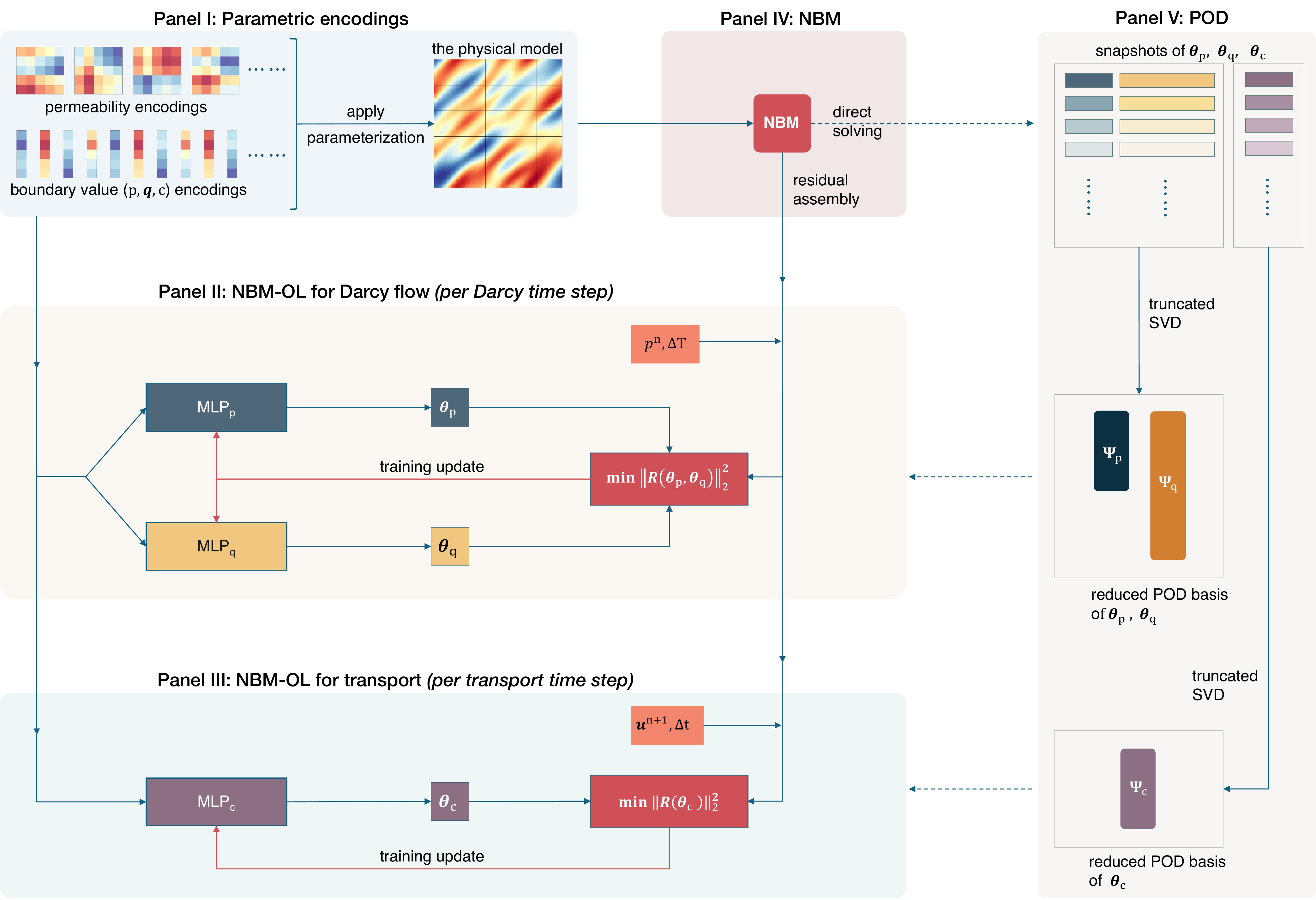}
    \caption{%
    \bfseries NBM-OL for parametric Darcy--transport systems.\,
    \bfseries Panel I,
    \normalfont Parameterization of permeability and boundary value. 
    \bfseries Panel II,
    \normalfont NBM--OL for Darcy flow (per Darcy time step, $\Delta T$).
    $\mathrm{MLP}_{p}$ and $\mathrm{MLP}_{q}$ learn the pressure and mass-flux solution coefficients $\boldsymbol{\theta}_{\mathrm{p}}$ and $\boldsymbol{\theta}_{\mathrm{q}}$.
    Training is self-supervised by $\min \lVert \boldsymbol{R}(\boldsymbol{\theta}_{\mathrm{p}},\boldsymbol{\theta}_{\mathrm{q}}) \rVert_{2}^{2}$ from \eqref{eq:nbm_darcy_picard}, where $\boldsymbol{R}(\theta_{\mathrm{p}},\theta_{\mathrm{q}})=\mathbf{W}\bigl(\mathbf{A}_{\mathrm{flow}}\,\boldsymbol{\theta}_{p,q}-\mathbf{b}_{\mathrm{flow}}\bigr)$ with time stepping indices omitted. 
    $p^n$ denotes the pressure from the previous Darcy time-step, used for the current update.
    \bfseries Panel III,
    \normalfont NBM--OL for transport (per transport time-step, $\Delta t$).
    $\mathrm{MLP}_{c}$ learns concentration coefficients $\boldsymbol{\theta}_{\mathrm{c}}$ by $\min \lVert \boldsymbol{R}(\boldsymbol{\theta}_{\mathrm{c}}) \rVert_{2}^{2}$ from \eqref{eq:nbm_transport_upwind}, where $\boldsymbol{R}(\theta_{\mathrm{c}})=\mathbf{A}_{\mathrm{up}}\,\boldsymbol{\theta}_{c}-\mathbf{b}_{\mathrm{up}}$ with time stepping indices omitted.
    $\mathbf{u}^{n+1}$ denotes the velocity after the current Darcy time-step update, computed as $\mathbf{u}^{n+1}=\mathbf{q}^{n+1}/\rho^{n+1}$.
    \bfseries Panel IV,
    \normalfont Optional POD compression of neural basis coefficients.
    $\Psi_{p}$, $\Psi_{q}$, and $\Psi_{c}$ are the respective reduced POD bases.
    }
    \label{fig:fig4}
\end{figure}

\subsection{Residual--error certificate for the weighted Darcy projection}
\vspace{-1.5ex}
We consider a single implicit time step of the slightly-compressible Darcy update $T^n\to T^{n+1}$.
The density increment satisfies
\(
\rho(p^{n+1})-\rho(p^n)=\rho_{\mathrm{0}}\,c_f\,(p^{n+1}-p^n).
\)
For clarity, here we show linearization by lagging the density prefactor at time level $n$ and introduce
\(
\alpha:=\rho(p^n)\frac{\kappa}{\mu}, \,\,
\beta:=\frac{\varepsilon\,\rho_{\mathrm{0}}\,c_f}{\Delta T}.
\)
Let $(p,\mathbf{q})$ denote the exact mixed state and $(\tilde p,\tilde{\mathbf{q}})$ its NBM approximation.
The corresponding errors are $e_p=\tilde p-p$ and $e_{\mathbf q}=\tilde{\mathbf q}-\mathbf q$.
We write \eqref{eq:momentum}--\eqref{eq:mass} in the linearized time step-dependent form
\begin{align}
\mathbf{q} + a\,\nabla p &= 0 \quad \text{in }\Omega, \label{eq:linear_momentum}\\
\nabla\cdot \mathbf{q}+ \beta\, p &= 0 \quad \text{in }\Omega, \label{eq:linear_mass}
\end{align}
with mixed boundary conditions \(p=p_D\) on \(\partial\Omega_p\) and \(\mathbf{q}\cdot \mathbf{n}=q_N\) on \(\partial\Omega_q\).
We define the residuals
\[
\begin{array}{ll}
r_m := \tilde{\mathbf q}+a\nabla \tilde p, \,\,\,
&
r_c := \nabla\!\cdot\tilde{\mathbf q}+\beta \tilde p,\\
r_D := \tilde p-p_D, \,\,\,
&
r_N := \tilde{\mathbf q}\cdot \mathbf{n}-q_N.
\end{array}
\]
and collect them in the weighted residual norm
\begin{equation*}
\|R(\tilde p,\tilde{\mathbf q})\|_{W}^{2}
:=
\|w_m\,r_m\|_{L^2(\Omega)}^{2}
+\|w_c\,r_c\|_{L^2(\Omega)}^{2}
+\|w_D\,r_D\|_{L^2(\partial\Omega_p)}^{2}
+\|w_N\,r_N\|_{L^2(\partial\Omega_q)}^{2},
\end{equation*}
where $w_{\star}$ is given in \eqref{eq:ls-weight} for $\star\in\{m,c,D,N\}$.
We further write the natural graph norm of error
\begin{equation*}
\|(e_p,e_{\mathbf q})\|_{\mathcal E}^2
:=
\|e_p\|_{H^1(\Omega)}^2
+\|e_{\mathbf q}\|_{H(\mathrm{div};\,\Omega)}^2
+\|e_p\|_{L^2(\partial\Omega_p)}^2
+\|e_{\mathbf q}\|_{L^2(\partial\Omega_q)}^2.
\end{equation*}
Then, Lemma~\ref{lem:darcy_equivalence} shows that this weighted residual norm is equivalent to the natural graph norm such that $\|R(\tilde p,\tilde{\mathbf q})\|_{W}\asymp\,\|(e_p,e_{\mathbf q})\|_{\mathcal E}$.
Consequently, we provide a computable residual--error certificate. 

\begin{lemma}[Residual--error equivalence for the weighted first-order least-squares Darcy problem]
\label{lem:darcy_equivalence}
Assume $0<a_{\min}\le a(x)\le a_{\max}$ and $0<\beta_{\min}\le \beta(x)\le \beta_{\max}$ a.e.\ in $\Omega$.
Assume the boundary split $\partial\Omega=\partial\Omega_p\cup\partial\Omega_q$ ensures uniqueness, e.g.,
$|\partial\Omega_p|>0$.
Then there exist constants $C_1,C_2>0$, independent of $(\tilde p,\tilde{\mathbf q})$, such that
\begin{equation}\label{eq:equivalency_norm}
C_1 \,\|(e_p,e_{\mathbf q})\|_{\mathcal E}
\le
\|R(\tilde p,\tilde{\mathbf q})\|_{W}
\le
C_2 \,\|(e_p,e_{\mathbf q})\|_{\mathcal E},
\end{equation}
\end{lemma}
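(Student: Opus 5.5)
Since the exact state satisfies the linearized system with homogeneous residuals, linearity gives $r_m = e_{\mathbf q}+a\nabla e_p$, $r_c=\nabla\cdot e_{\mathbf q}+\beta e_p$, $r_D=e_p|_{\partial\Omega_p}$ and $r_N=e_{\mathbf q}\cdot\mathbf n|_{\partial\Omega_q}$. The weighted residual norm is therefore the norm of a fixed linear first-order operator $\mathcal{L}(e_p,e_{\mathbf q})$, and the lemma becomes a two-sided bound for this operator in the graph norm. The weights in \eqref{eq:ls-weight} are positive constants, or, for $w_m$, bounded above and below because $\kappa$ and $\rho(p^n)$ are. We may therefore first compare with the unweighted norm and absorb $\min w_\star$, $\max w_\star$ into $C_1,C_2$. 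The proof then has the structure of a standard first-order system least-squares (FOSLS) equivalence argument, as in Cai--Lazarov--Manteuffel--McCormick.

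\textbf{Upper bound.} This is routine. By the triangle inequality and $a\le a_{\max}$, $\beta\le\beta_{\max}$,
\[
\|r_m\|\le \|e_{\mathbf q}\|+a_{\max}\|\nabla e_p\|, \qquad \|r_c\|\le\|\nabla\cdot e_{\mathbf q}\|+\beta_{\max}\|e_p\|.
\]
The boundary terms are bounded directly by the trace terms included in $\|\cdot\|_{\mathcal E}$. For $r_N$ one notes that $|e_{\mathbf q}\cdot\mathbf n|\le|e_{\mathbf q}|$, so $\|r_N\|_{L^2(\partial\Omega_q)}\le\|e_{\mathbf q}\|_{L^2(\partial\Omega_q)}$. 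Summing gives $C_2$.

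\textbf{Lower bound.} This is the main step. Test $r_m$ against $\nabla e_p$ and $r_c$ against $e_p$, then integrate by parts:
\[
(a\nabla e_p,\nabla e_p)+(\beta e_p,e_p)=(r_m,\nabla e_p)+(r_c,e_p)-\int_{\partial\Omega}(e_{\mathbf q}\cdot\mathbf n)\,e_p .
\]
On $\partial\Omega_q$, replace $e_{\mathbf q}\cdot\mathbf n$ by $r_N$; on $\partial\Omega_p$, replace $e_p$ by $r_D$. Apply Cauchy--Schwarz, trace inequalities and Young's inequality. Since $\beta\ge\beta_{\min}>0$ already controls $\|e_p\|_{L^2}$, this yields
\[
\|e_p\|_{H^1}^2\lesssim \|r_m\|^2+\|r_c\|^2+\|r_N\|_{\partial\Omega_q}\|e_p\|_{\partial\Omega_q}+\|e_{\mathbf q}\cdot\mathbf n\|_{\partial\Omega_p}\|r_D\|_{\partial\Omega_p}.
\]
The assumption $|\partial\Omega_p|>0$ is then only needed as a Poincaré-type safeguard, for example when $\beta$ degenerates. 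The remaining quantities follow from the residual equations. First, $\|e_{\mathbf q}\|\le\|r_m\|+a_{\max}\|\nabla e_p\|$. Next, $\|\nabla\cdot e_{\mathbf q}\|\le\|r_c\|+\beta_{\max}\|e_p\|$. The boundary pieces give $\|e_p\|_{L^2(\partial\Omega_p)}=\|r_D\|$, and $\|e_p\|_{\partial\Omega_q}$ is bounded by the trace of $\|e_p\|_{H^1}$ and absorbed.

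\textbf{Main obstacle.} The two boundary cross terms are where I expect trouble. $\|e_{\mathbf q}\cdot\mathbf n\|_{L^2(\partial\Omega_p)}$ is not controlled by $H(\mathrm{div})$ (only $H^{-1/2}$ is). Similarly, the full tangential trace $\|e_{\mathbf q}\|_{L^2(\partial\Omega_q)}$ appearing in $\|\cdot\|_{\mathcal E}$ is not controlled by $r_N$, which sees only the normal component.

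I would first handle $e_{\mathbf q}\cdot\mathbf n$ on $\partial\Omega_p$ through the $H^{-1/2}$--$H^{1/2}$ duality pairing. This bounds the term by $\|e_{\mathbf q}\|_{H(\mathrm{div})}\|r_D\|_{H^{1/2}}$, which holds provided the boundary residual is measured in the $h$-scaled norm. The factor $\sqrt{1/h}$ in $w_D$ mimics exactly this (an inverse-estimate surrogate for $H^{1/2}$ on the discrete space), and $C_1$ would then depend on the collocation spacing.

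For the tangential trace on $\partial\Omega_q$, I would use $e_{\mathbf q}=r_m-a\nabla e_p$ to write $\|e_{\mathbf q}\|_{\partial\Omega_q}\le\|r_m\|_{\partial\Omega_q}+a_{\max}\|\nabla e_p\|_{\partial\Omega_q}$. This requires extra regularity, or an inverse/trace inequality valid on the finite neural space $V_{N_b}\times V_{N_b}$.

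So the honest plan is to prove the lower bound for errors restricted to the finite-dimensional neural space. There norm equivalence with $h$- and $N_b$-dependent constants closes the gap, and these constants stay independent of $(\tilde p,\tilde{\mathbf q})$ as the lemma requires. A uniform continuum constant would demand weaker boundary norms in $\|\cdot\|_{\mathcal E}$.
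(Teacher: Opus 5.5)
Your reduction to unweighted residuals and your upper bound agree with the paper. The paper states the reduction and folds the upper bound into a citation. For the lower bound it only invokes the FOSLS $U$-ellipticity of Cai--Lazarov--Manteuffel--McCormick. That result is proved on $H^1_{0,D}(\Omega)\times H_{0,N}(\mathrm{div};\Omega)$, with the boundary conditions built into the spaces. It says nothing about $L^2$ boundary residuals or about the $L^2$ trace terms in $\|\cdot\|_{\mathcal E}$.

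Your two obstacles are genuine, and they make the lemma false for arbitrary $(\tilde p,\tilde{\mathbf q})$. So no argument with constants independent of the trial space can succeed. Two counterexamples show this.

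\textbf{Tangential trace.} Take $e_p=0$ and $e_{\mathbf q}=\nabla^\perp\psi_\epsilon$ with $\psi_\epsilon(x,y)=\epsilon\,g(x)f(y/\epsilon)$. Here $\{y=0\}$ is a flux edge, $g$ is compactly supported inside it, and $f\in C_c^\infty([0,1))$ with $f(0)=0$, $f'(0)=1$. For small $\epsilon$ this gives $r_c=r_D=r_N=0$ and $\|r_m\|_{L^2(\Omega)}=O(\epsilon^{1/2})$, whereas $\|e_{\mathbf q}\|_{L^2(\partial\Omega_q)}\ge\|g\|_{L^2}$.

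\textbf{Dirichlet part.} Work on the unit square with $\partial\Omega_p=\{x=0\}$, $a\equiv 1$ and constant $\beta$. Take $e_p=e^{-\mu x}\cos(k\pi y)$ with $\mu^2=k^2\pi^2+\beta$, and $e_{\mathbf q}=-\nabla e_p$. Then $r_m=r_c=0$. The residual $r_N$ vanishes on $y=0,1$ and is $O(\mu e^{-\mu})$ on $x=1$, and $\|r_D\|_{L^2}=2^{-1/2}$. Yet $\|e_p\|_{H^1}\gtrsim k^{1/2}$, so for fixed $h$ the ratio of the two sides grows like $(kh)^{1/2}$.

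Hence the $\sqrt{1/h}$ in $w_D$ does not rescue your duality step. It reproduces the $H^{1/2}$ scaling only on a trial space with an inverse inequality of constant $h^{-1/2}$, as for finite elements on an $h$-mesh. A neural space has no such link to the collocation spacing; its inverse constant is set by $N_b$ and the sampled weights.

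Your finite-dimensional fallback is therefore the only reading under which the stated lemma is true, but it needs two repairs.
\begin{enumerate}
\item The errors do not lie in the neural space. They lie in $U:=(V_{N_b}\times\mathbf{V}_{N_b})+\mathrm{span}\{(p,\mathbf q)\}$, so the norm equivalence must be invoked on $U$, assuming $\mathbf q$ has an $L^2$ trace on $\partial\Omega_q$. Then $C_1,C_2$ also depend on the exact solution.
\item $\|R\|_W$ must be a norm on $U$. This follows from your own identity: if all residuals vanish, then $(a\nabla e_p,\nabla e_p)+(\beta e_p,e_p)=0$, hence $e_p=0$ and $e_{\mathbf q}=-a\nabla e_p=0$.
\end{enumerate}
Once you argue this way, the duality and inverse-estimate steps become unnecessary. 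However, $C_1$ is then purely space-dependent. If the neural spaces become dense in $\|\cdot\|_{\mathcal E}$, the examples above force $C_1\to 0$ under enrichment. So what you prove is strictly weaker than the enrichment-stable certificate the paper advertises.

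Finally, your closing remark gives only half of the repair. Weakening only the boundary terms of $\|\cdot\|_{\mathcal E}$ cannot help, because the second example already defeats the $\|e_p\|_{H^1}$ part. What gives uniform constants is two changes together:
\begin{enumerate}
\item measure $r_D$ in $H^{1/2}(\partial\Omega_p)$;
\item replace $\|e_{\mathbf q}\|_{L^2(\partial\Omega_q)}$ by $\|e_{\mathbf q}\cdot\mathbf n\|_{L^2(\partial\Omega_q)}$.
\end{enumerate}
Then lift $r_D$ into $H^1(\Omega)$ and $r_N$ into $H(\mathrm{div};\Omega)$. The lifted difference satisfies homogeneous boundary conditions, so the cited $U$-ellipticity applies to it and yields both bounds.
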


\begin{proof}
The equivalence between $\|\cdot\|_{W}$ and the corresponding unweighted $L^2$-residual norm follows from
$0<c_\star\le w_\star\le C_\star$.
The error--residual equivalence \eqref{eq:equivalency_norm} is the $U$-ellipticity result for the
first-order system least-squares operator associated with \eqref{eq:linear_momentum}--\eqref{eq:linear_mass};
see, e.g., \cite{cai1994first,gantner2021further}.
\end{proof}
\vspace{-3.ex}

\subsection{Residual--error certificate for the stabilized transport projection}
\vspace{-1.5ex}
We next consider an implicit transport sub-timestep $t^m\to t^{m+1}$ based on the upwind control-volume balance in \eqref{eq:transport_fvm}. 
Let $c$ denote the exact concentration and $\tilde c$ its NBM approximation, and define the error by $e_c:=\tilde c-c$.
We further define the residual for this sub-timestep as $r_{\mathrm{up}}:=A_{\mathrm{up}}\tilde c-b$, so that $r_{\mathrm{up}}=A_{\mathrm{up}}e_c$. 
For an arbitrary cellwise scalar field $v=\{v_K\}$, we introduce the pore-volume scaled diagonal mass weight
\(
D:=\mathrm{diag}(\varepsilon |K|),\,
\|v\|_D^2:=\sum_K (\varepsilon|K|)\,v_K^2.
\)
For each interior face $f=K|L$, let $\mathbf{n}_{K,f}$ denote the unit normal on $f$ pointing outward from cell $K$, and set
\(
\alpha_f:=\big|(\mathbf{u}\cdot \mathbf{n}_{K,f})_f\big|\,|f|.
\)
We can write the upwind dissipation semi-norm as
\(
|v|_{\mathrm{up}}^2:=\frac12\sum_{f=K|L}\alpha_f\,(v_K-v_L)^2.
\)
We then define the transport energy norm
\(
\|v\|_{\mathcal E,\mathrm{tr}}^2:= \frac{1}{\Delta t}\|v\|_D^2 + |v|_{\mathrm{up}}^2.
\)
Since $|v|_{\mathrm{up}}^2\ge 0$, the transport energy norm controls the cellwise Euclidean error of concentration
\begin{equation*}
\|e_c\|_2
\ \le\
\sqrt{\frac{\Delta t}{\varepsilon |K|_{\min}}}\,\|e_c\|_{\mathcal E,\mathrm{tr}},
\qquad
|K|_{\min}:=\min_K |K|.
\end{equation*}
On a fixed discretization, $\|\cdot\|_{\mathcal E,\mathrm{tr}}$ and $\|\cdot\|_2$ are equivalent up to constants. The next lemma shows that the Euclidean residual norm $\|r_{\mathrm{up}}\|_2$ is equivalent to the transport energy norm of the error, i.e., $\|r_{\mathrm{up}}\|_2 \,\asymp\, \|e_c\|_{\mathcal E,\mathrm{tr}}$, thereby providing a computable residual--error certificate.

\begin{lemma}[Residual--error equivalence for upwind control-volume transport problem]
\label{lem:transport_equivalence}
Assume $\varepsilon>0$ and a fixed transport discretization (mesh, velocity field, and time step).
Then there exist constants $C_1,C_2>0$, independent of $\tilde c$, such that
\begin{equation}
C_1\,\|e_c\|_{\mathcal E,\mathrm{tr}}
\ \le\
\|r_\mathrm{up}\|_{2}
\ \le\
C_2\,\|e_c\|_{\mathcal E,\mathrm{tr}}.
\label{eq:transport_equivalence}
\end{equation}
\end{lemma}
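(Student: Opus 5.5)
The argument works in finite dimensions, because the discretization is fixed. Here $A_{\mathrm{up}}$ acts on cellwise vectors $v=\{v_K\}$ (the cell values of $c_{NN}$). With the upwind balance \eqref{eq:transport_fvm}, the operator splits as
\[
A_{\mathrm{up}} = \tfrac{1}{\Delta t}D + U,
\]
where $U$ is the upwind advection matrix; $\varepsilon|K|$ multiplies the time-difference term. The two bounds in \eqref{eq:transport_equivalence} are then norm equivalences between $v\mapsto\|A_{\mathrm{up}}v\|_2$ and $v\mapsto\|v\|_{\mathcal E,\mathrm{tr}}$ on $\mathbb{R}^{N_K}$. Two facts suffice: both are norms, and $A_{\mathrm{up}}$ is injective. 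Equivalence of norms in finite dimensions then gives the constants, and the identity $r_{\mathrm{up}}=A_{\mathrm{up}}e_c$ transfers the result to the error. I will also extract explicit constants so the certificate is meaningful.

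\textbf{Upper bound.} The mass part gives $\|\tfrac{1}{\Delta t}Dv\|_2\le \tfrac{\varepsilon|K|_{\max}}{\Delta t}\|v\|_2$, and $\|v\|_2\le\sqrt{\Delta t/(\varepsilon|K|_{\min})}\,\|v\|_{\mathcal E,\mathrm{tr}}$ as shown just before the lemma. For the advective part, $(Uv)_K=\sum_{f}(\mathbf u\cdot\mathbf n)_f v^{\mathrm{up}}_f|f|$, which is bounded by $\max_K\sum_{f\subset\partial K}\alpha_f$ times $\max|v|$. This again yields a multiple of $\|v\|_2$ and hence of $\|v\|_{\mathcal E,\mathrm{tr}}$. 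So $C_2$ depends on $\Delta t$, $\varepsilon$, $|K|$, and $\max_f\alpha_f$.

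\textbf{Lower bound.} I will use the energy identity. Compute $\langle A_{\mathrm{up}}v,v\rangle=\tfrac1{\Delta t}\|v\|_D^2+\langle Uv,v\rangle$. The standard upwind computation, rearranging face sums and using $ab=\tfrac12(a^2+b^2-(a-b)^2)$, gives
\[
\langle Uv,v\rangle = |v|_{\mathrm{up}}^2+\tfrac12\sum_K (\nabla\!\cdot\mathbf u)_K\,|K|\,v_K^2+\text{(outflow boundary terms}\ge0).
\]
This requires discrete divergence control: $\sum_{f\subset\partial K}(\mathbf u\cdot\mathbf n)_f|f|\ge -\tfrac{1}{\Delta t}\varepsilon|K|(1-\eta)$ for some $\eta>0$. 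The condition holds automatically for the nearly solenoidal Darcy velocity, or for $\Delta t$ small. Under it, $\langle A_{\mathrm{up}}v,v\rangle\ge \min(\eta,1)\|v\|_{\mathcal E,\mathrm{tr}}^2$. Cauchy--Schwarz gives $\langle A_{\mathrm{up}}v,v\rangle\le\|A_{\mathrm{up}}v\|_2\|v\|_2$, and $\|v\|_2\le\sqrt{\Delta t/(\varepsilon|K|_{\min})}\|v\|_{\mathcal E,\mathrm{tr}}$. Dividing yields $C_1=\min(\eta,1)\sqrt{\varepsilon|K|_{\min}/\Delta t}$.

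\textbf{Main obstacle.} The hard step is the lower bound, specifically the sign of the compressive term $\nabla\cdot\mathbf u$. A compressible $\mathbf u=\mathbf q/\rho$ need not be discretely divergence free. If the divergence control above fails, I fall back on the fact that the lemma only asserts constants for a fixed discretization. Upwind implicit matrices have the $M$-matrix structure: the diagonal is $\varepsilon|K|/\Delta t$ plus the outflow, and the off-diagonals are nonpositive. I will argue invertibility directly, either from column diagonal dominance (which follows from the face-flux conservation of the upwind sum) or by ordering cells along the flow. Injectivity then gives $C_1=\sigma_{\min}(A_{\mathrm{up}})\cdot\sqrt{\varepsilon|K|_{\min}/\Delta t}\,/\,\text{(norm-equivalence constant)}$ by compactness of the unit sphere. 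Combined with $r_{\mathrm{up}}=A_{\mathrm{up}}e_c$, this completes \eqref{eq:transport_equivalence}.
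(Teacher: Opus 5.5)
Your proof is correct. It rests on the same coercivity mechanism as the paper's but carries it out differently, and it handles a point the paper's proof glosses over.

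The paper writes $A_{\mathrm{up}}=\frac1{\Delta t}D+V$ and asserts $v^\top V_s v=|v|_{\mathrm{up}}^2$. From this it gets $G=\frac1{\Delta t}D+V_s\succ0$ with $\|v\|_{\mathcal E,\mathrm{tr}}^2=v^\top Gv$. It then invokes the DPG-type equivalence $\|e\|_G\asymp\|A_{\mathrm{up}}e\|_{G^{-1}}$, and only at the end passes from $\|\cdot\|_{G^{-1}}$ to $\|\cdot\|_2$ by spectral equivalence.

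You instead test with $v$ in the Euclidean pairing and use $\|v\|_2\le\sqrt{\Delta t/(\varepsilon|K|_{\min})}\,\|v\|_{\mathcal E,\mathrm{tr}}$. This avoids the dual norm and gives explicit constants.

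More substantively, your energy identity keeps terms the paper's identity drops. In fact $v^\top Vv$ equals $|v|_{\mathrm{up}}^2$ plus $\frac12\sum_K\bigl(\sum_{f\subset\partial K}(\mathbf u\cdot\mathbf n)_f|f|\bigr)v_K^2$ plus nonnegative boundary-face terms. So the paper's equality, and with it $G\succ0$ for arbitrary $\Delta t$, tacitly requires a discretely divergence-free velocity. Even then it holds only as an inequality because of the boundary terms. The compressible velocity $\mathbf u=\mathbf q/\rho$ need not be divergence-free.

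Your divergence condition makes this hypothesis explicit. Your fallback removes it altogether: $A_{\mathrm{up}}$ is strictly column diagonally dominant, with margin at least $\varepsilon|K|/\Delta t$, for any velocity. This holds because each interior outflow of a cell appears exactly once as an off-diagonal entry of its column, since face fluxes are single-valued. The fallback therefore proves the lemma under exactly its stated assumptions, at the cost of a non-explicit constant.

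What each route buys:
\begin{itemize}
\item The paper's formulation gives a structural reading: in the $G^{-1}$ dual norm the lower bound holds with constant one, in the spirit of optimal test norms.
\item Yours gives explicit constants in the divergence-controlled case and an unconditional argument otherwise.
\end{itemize}

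Two small fixes. First, in the fallback the constant should be $\sigma_{\min}(A_{\mathrm{up}})/C$, where $\|v\|_{\mathcal E,\mathrm{tr}}\le C\|v\|_2$; the extra factor $\sqrt{\varepsilon|K|_{\min}/\Delta t}$ does not belong there. Second, drop the ``ordering along the flow'' alternative. It needs an acyclic face-flux graph, which is not guaranteed, and diagonal dominance already suffices.
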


\begin{proof}
For the upwind control-volume operator $A=\frac1{\Delta t}D+V$, the symmetric part
$V_s=\tfrac12(V+V^\top)$ induces the classical upwind jump dissipation, i.e.,
$v^\top V_s v = |v|_{\mathrm{up}}^2\ge 0$ for all cell vectors $v$; see, e.g., \cite{boyer2012analysis,doi:https://doi.org/10.1002/9781119176817.ecm2010}.
Hence $G:=\frac1{\Delta t}D+V_s\succ0$ and $\|v\|_{\mathcal E,\mathrm{tr}}^2=v^\top G v$.
Writing $A=G+V_k$ with skew-symmetric $T_k=\tfrac12(V-V^\top)$, the DPG theory yields the residual--error equivalence in the induced dual norm,
$\|e\|_G \asymp \|Ae\|_{G^{-1}}$; see, e.g., \cite{demkowicz2010class,demkowicz2011class,demkowicz2025discontinuous}.
Finally, on a fixed discretization, the Euclidean residual $\|r\|_2$ is equivalent to $\|r\|_{G^{-1}}$
by standard spectral norm equivalence for symmetric positive definite matrices, which completes \eqref{eq:transport_equivalence}.
\end{proof}
\vspace{-2ex}

\subsection{Parametric Darcy flow under varying permeability and boundary flux}
\vspace{-1.5ex}
We here assess whether NBM--OL can learn the Darcy flow operator under two different types of parametric variation: distributed permeability uncertainty and scalar boundary-flux control. 
Case-I considers an incompressible setting in which the base permeability field is parameterized through a piecewise-constant $5\times5$ coarse-block representation (block25), yielding a reduced operator family that retains the dominant spatial variability. 
Case-II considers the multiscale CO$_2$ storage model, where the varying parameter is the uniform top-boundary mass flux in $[488,\,976]~\mathrm{kg/(m^2\,day)}$. 
Together, these two cases probe whether NBM--OL can handle both field variation and control variation (Fig.~\ref{fig:fig5}).

\begin{figure}[hb!]
    \centering
    \includegraphics[width=1\linewidth]{figs/fig5.jpg}
    \caption{%
    \bfseries NBM--OL learns parametric Darcy flow operators under varying permeability and boundary flux.\,
    \bfseries a, \normalfont Problem setup (kv) and base permeability field.\,
    \bfseries b, \normalfont Representative block25 parametrizations.\,
    \bfseries c-d,
    \normalfont Left to right: true pressure/velocity magnitude, predicted pressure/velocity magnitude, respective absolute error fields.\,
    \bfseries e,
    \normalfont In-distribution generalization.\,
    \bfseries f-h,
    \normalfont Out-of-distribution generalization across heterogeneity level, correlation length, and spatial rotation.\,
    \bfseries i, \normalfont Problem setup (bv) and permeability field.\,
    \bfseries j-k,
    \normalfont Left to right: true pressure/velocity magnitude, predicted pressure/velocity magnitude, respective absolute error fields, at 200~days.\,
    \bfseries l,
    \normalfont In-distribution generalization.\,
    \bfseries m,
    \normalfont Out-of-distribution generalization in boundary flux.\,
    \bfseries n,
    \normalfont Out-of-distribution generalization in time horizon.\,
    \bfseries o, \normalfont Residual--error correlation during training: $\sqrt{\mathcal{E}_{\mathrm{rel}}}$ vs. relative $L_2$ errors.\,
    \bfseries p, \normalfont Training dynamics measured by normalized $\mathcal{E}_{\mathrm{rel}}$.
    \bfseries q, \normalfont Runtime comparison and speedups of NBM--OL relative to FVM, reported as CPU wall-time statistics in seconds.
    \bfseries Note:
    \normalfont\ Pressure is reported in $\mathrm{bar}$ and velocity in $\mathrm{m\,day^{-1}}$. 
    }
    \label{fig:fig5}
\end{figure}
For Case-I, NBM--OL achieves consistently low in-distribution relative $L_2$ errors for pressure and velocity (Fig.~\ref{fig:fig5}e). 
More importantly, the learned operator remains robust under out-of-distribution shifts in heterogeneity level, correlation length, and spatial rotation (Fig.~\ref{fig:fig5}f--h), indicating that the model captures structural flow responses rather than narrowly interpolating within the training ensemble.
For Case-II, the learned operator again produces tightly clustered sub-percent errors in-distribution (Fig.~\ref{fig:fig5}l). 
Its accuracy is further preserved when extrapolating the boundary-flux magnitude by $50\%$ beyond the training range and when extending the prediction horizon by $100\%$ (Fig.~\ref{fig:fig5}m--n), demonstrating stable parametric and temporal generalization in the transient setting.

The training diagnostics show a clean decay of the normalized residual and, more importantly, a near-linear relation between $\sqrt{\mathcal{E}_{\mathrm{rel}}}$ and the solution $L_2$ errors across test cases; see Fig.~\ref{fig:fig5}l. 
This identifies $\sqrt{\mathcal{E}_{\mathrm{rel}}}$ as a practically useful accuracy indicator during learning, unlike generic optimization losses that need not reflect state accuracy in operator learning. 
As expected, NBM--OL supports rapid repeated parametric evaluation, delivering orders-of-magnitude speedups over FVM (Fig.~\ref{fig:fig5}q). 
The experiment setups and implementation details are provided in Appendix~\ref{Appx:H}.

\vspace{-1.5ex}
\subsection{Parametric Darcy--transport under coupled flow and transport dynamics}
\vspace{-1.5ex}
In Fig.~\ref{fig:fig6}, we consider a more stringent task in which transport is coupled with Darcy flow. 
The transport component introduces sharper, hyperbolic solution features, so accurate prediction of evolving concentration fronts becomes a harder test for reduced operator representations. 
In this setting, both the boundary mass-flux and the injected tracer concentration are treated as parameters within the same multiscale CO$_2$ storage configuration introduced earlier.

\begin{figure}[hb!]
    \centering
    \includegraphics[width=1\linewidth]{figs/fig6.jpg}
    \caption{%
    \bfseries NBM--OL learns parametric Darcy–transport operators under varying boundary flux and concentration.\,
    \bfseries a, \normalfont Problem setup for boundary-flux-varying (bv) and injection-concentration-varying (cv) Darcy--transport operator learning.\,
    \bfseries b--c,
    \normalfont Left to right: true concentration, predicted concentration with the tracer front overlaid, and the corresponding absolute error; b is at 20~days and c is at 200~days.\,
    \bfseries d,
    \normalfont In-distribution generalization, measured by relative $L_2$ errors at 200~days over 100 random tests.\,
    \bfseries e,
    \normalfont Out-of-distribution generalization in boundary flux and concentration, measured by relative $L_2$ errors at 200~days over 100 random runs.\,
    \bfseries f,
    \normalfont Out-of-distribution generalization in time horizon, measured by relative $L_2$ errors at 400~days over 100 random runs.\,
    \bfseries g, \normalfont Training dynamics measured by normalized $\mathcal{E}_{\mathrm{rel}}$.
    \bfseries h, \normalfont Residual--error correlation during training: $\sqrt{\mathcal{E}_{\mathrm{rel}}}$ vs. relative $L_2$ errors.\,
    \bfseries i, \normalfont Runtime comparison and speedups of NBM--OL relative to FVM, reported as CPU wall-time statistics in seconds over 100 random runs.
    \bfseries Note:
    \normalfont\ Concentration is reported in $\mathrm{ppm}$.
    }
    \label{fig:fig6}
\end{figure}
As shown in Fig.~\ref{fig:fig6}b--c, NBM--OL accurately predicts the concentration fields while preserving the location and shape of the evolving tracer fronts. 
When the boundary flux and injected concentration are sampled within the training ranges, the resulting in-distribution errors remain tightly clustered at very low levels; see Fig.~\ref{fig:fig6}d. 
More importantly, when both parameter ranges are extrapolated by $50\%$ and the prediction horizon is extended by $100\%$, the error increase remains moderate and no instability or catastrophic degradation is observed (Fig.~\ref{fig:fig6}e--f). 
This indicates that the learned operator captures the coupled flow--transport structure and front evolution mechanisms, rather than merely interpolating within the training ensemble.

Figures~\ref{fig:fig6}g--h further show that this residual-based accuracy indicator remains effective in the coupled flow--transport setting. 
Even with sharper transport dynamics, $\sqrt{\mathcal{E}_{\mathrm{rel}}}$ continues to track concentration errors closely during training, indicating that the residual--error relation established for Darcy flow alone carries over to the more demanding coupled problem. 
Similarly, we achieve orders-of-magnitude speedups over FVM for parametric rollout using NBM--OL (Fig.~\ref{fig:fig6}i). 
See Appendix~\ref{Appx:H} for experiment setups and implementation details.

\section{\textbf{Conclusion}}
\vspace{-1.ex} 
NBM is a general, projection-based neural framework that unifies solving and learning PDE-based systems. 
Its central idea is to move physics out of heuristic loss-driven training and into the neural discretization itself through physics-conforming approximation spaces and operator-induced residual metrics. 
It yields a well-conditioned deterministic minimization, in which the resulting residual is a numerically meaningful certificate tied to both solution approximation and PDE enforcement.
This gives stability under basis enrichment, supports systematic accuracy assessment, and produces reduced coordinates as natural targets for parametric learning.  
These coordinates can be learned consistently across parameter instances, while the residual certificate remains an interpretable measure for monitoring, diagnosis, and improving operator predictivity.
Beyond the Darcy--transport instantiation, the new method applies whenever a PDE admits stable discrete enforcement under an appropriate residual norm.
Although we experiment rather simplified parameterizations, NBM--OL naturally extends to general parametric variations in operator coefficients, boundary conditions, and source terms, including spatially-varying and time-dependent profiles.
From an engineering perspective, this makes the framework particularly attractive for repeated-query modeling under varying material properties, boundary conditions, and operating controls.
\vspace{+1ex}

There are several directions to further broaden robustness and generality. 
Our current neural bases are global and can struggle with localized non-smoothness or nonconvex geometries, where Gibbs-type artifacts \cite{gottlieb1997gibbs} may cause loss of accuracy. 
NBM, however, is not limited to global representations and can be localized via domain decomposition, partition-of-unity, or spectral-element-style \cite{patera1984spectral} global--local hybrids. 
The presented weighting and stabilization are based on collocation and should be viewed as a simplified, not optimal, choice.
More rigorous variational constructions can draw on DPG-inspired designs.
Within this umbrella, we can further benefit from residual localization via Riesz representers \cite{demkowicz2012class}, yielding a computable error indicator that drives adaptive enrichment where additional attention is needed. 
In this way, NBM can further provide a diagnostic tool that guides modeling design.
Measurements can also be appended as additional residual blocks to uncover system parameters, informed by model-data identifiability and compatibility.
Finally, NBM is not intended to circumvent the curse-of-dimensionality in high-dimensional PDEs, where the number of basis functions can become impractial. 
Most PDEs in science and engineering, however, are posed on 1D-3D spatial domains, which is the regime NBM targets. Extending to fully 3D systems is thus another natural step, where scalable residual evaluation, randomized least-squares, and effective preconditioning warrant further study.

\appendix
\titleformat{\section}{\normalfont\Large\bfseries}{Appendix \Alph{section}:}{+0.5em}{}
\renewcommand{\thesubsection}{\Alph{section}.\arabic{subsection}}
\renewcommand{\theequation}{\Alph{section}.\arabic{equation}}
\renewcommand{\thefigure}{\Alph{section}.\arabic{figure}}
\renewcommand{\thetable}{\Alph{section}.\arabic{table}}
\counterwithin{equation}{section}
\counterwithin{figure}{section}
\counterwithin{table}{section}

\section{Coupled Darcy flow--transport}\label{Appx:A}
\vspace{-1.5ex}
We consider an isothermal compressible fluid flow through a two-dimensional heterogeneous porous continuum.
The fluid motion is described by Darcy’s law and coupled with mass conservation:
\begin{align}
\mathbf{q} + \rho \frac{\kappa}{\mu} \nabla p &= 0,
\label{eq:momentum} \\[3pt]
\frac{\partial (\varepsilon \rho)}{\partial t} + \nabla \cdot \mathbf{q} &= 0,
\label{eq:mass}
\end{align}
where $\mathbf{q}(\mathbf{x},t)$ denotes the mass-flux,
$p(\mathbf{x},t)$ is the pressure,
$\varepsilon(\mathbf{x})$ is the porosity,
and $\rho(p)$ is the density, which is expressed through an equation of state,
$\rho(p)=\rho_\mathrm{0}\big(1+c_f(p-p_\mathrm{0})\big)$. 
$c_f$ is the fluid compressibility and $\rho_\mathrm{0}$ is the reference density at reference pressure $p_\mathrm{0}$.
The viscosity $\mu$ and porosity $\varepsilon$ are assumed constant, while
$\kappa(\mathbf{x})$ is a spatially varying permeability field reflecting the heterogeneity of the medium.
The Darcy velocity is $\mathbf{u} = \mathbf{q}/\rho$.
In addition, we consider a passive scalar field $c(\mathbf{x},t)$ representing a transported concentration, which evolves according to
\begin{align}
\frac{\partial (\varepsilon c)}{\partial t} + \nabla \cdot (c\,\mathbf{u}) = 0,
\label{eq:transport}
\end{align}
and is assumed not to influence the flow field.
For clarity, source terms are omitted.
With appropriate boundary conditions, equations~\eqref{eq:momentum}--\eqref{eq:transport} define a coupled Darcy flow--transport system that can solved sequentially.
Here, we use a standard finite-volume discretization to generate reference solutions, employing backward Euler time integration and first-order upwinding scheme for advection.
In a compact form, we can express this dynamics as
\begin{equation}
\label{eq:pde_system}
\begin{aligned}
\mathcal{L}\bigl(s(\mathbf{x},t)\bigr) &= 0
\quad \text{in } \Omega, \\[3pt]
\mathcal{B}\bigl(s(\mathbf{x},t)\bigr) &= g
\quad \text{on } \partial\Omega,
\end{aligned}
\end{equation}
where $s(\mathbf{x},t)$ denotes the collection of components from the primary unknowns, namely the pressure $p$, the mass-flux $q$, and the concentration $c$.
In addition to single-instance solving, we consider a parametric family of problems in which the governing operators depend on parameters.
Specifically, we distinguish between parameters associated with the interior operator in terms of operator coefficients and those associated with the boundary operator,
\begin{equation}
\label{eq:parametric_system}
\mathcal{L}_{\boldsymbol{\xi}_\kappa}\bigl(s(\mathbf{x},t)\bigr) = 0
\quad \text{in } \Omega,
\qquad
\mathcal{B}_{\boldsymbol{\xi}_b}\bigl(s(\mathbf{x},t)\bigr) = g_{\boldsymbol{\xi}_b}
\quad \text{on } \partial\Omega,
\end{equation}
where $\boldsymbol{\xi}_\kappa$ parameterizes the spatial variations in the permeability field and
$\boldsymbol{\xi}_b$ parameterizes the variations in boundary values.
We collectively denote the full parameter vector by
$\boldsymbol{\xi} = (\boldsymbol{\xi}_\kappa, \boldsymbol{\xi}_b)$.
Equation \eqref{eq:parametric_system} defines a family of PDE systems sharing a common structure for NBM-powered operator learning.

\section{Neural basis construction and evaluation}\label{Appx:B}
\vspace{-1.5ex}
\textbf{Neural basis definition.}
Let
\(
X\in\mathbb{R}^{M\times d}
\)
collect $M$ prescribed spatial points in a $d$-dimensional domain, with each row giving one point. 
We build the neural basis by passing $X$ through a fixed-parameter residual network with $L$ layers and widths $\{p_1,\dots,p_L\}$, with $p_0=d$. For $\ell=0,1,\dots,L$, let
\(
A_\ell(X)\in\mathbb{R}^{M\times p_\ell}
\)
denote the layer-$\ell$ outputs evaluated at $X$, where rows correspond to spatial points and columns to basis functions. For $\ell=1,\dots,L$, we define
\[
\begin{array}{rcl@{\qquad}rcl}
t_\ell(X) & = & A_{\ell-1}(X)\,W_\ell^T + \mathbf{1}\,b_\ell^T,
&
a_\ell(X) & = & t_\ell(X)\odot \alpha_\ell^T,\\[0.3ex]
z_\ell(X) & = & \sigma\!\left(a_\ell(X)\right),
&
s_\ell(X) & = & \beta_\ell\,A_{\ell-1}(X)\,P_\ell^T,\\[0.3ex]
A_\ell(X) & = & s_\ell(X) + z_\ell(X),
\end{array}
\]
where $W_\ell,P_\ell\in\mathbb{R}^{p_\ell\times p_{\ell-1}}$, $b_\ell\in\mathbb{R}^{p_\ell}$, $\alpha_\ell\in\mathbb{R}^{p_\ell}$, and $\beta_\ell\in\mathbb{R}$. Here $\mathbf{1}\in\mathbb{R}^M$ is the all-ones vector, $\odot$ denotes elementwise multiplication with broadcasting, and $\sigma$ acts elementwise. Here, $\sigma(\cdot)$ is taken as $\tanh(\cdot)$. The columns of $A_L(X)$ define the neural basis at the $M$ spatial points. When needed, the basis can be enriched by concatenating selected intermediate-layer outputs with $A_L(X)$.

\textbf{Layer concatenation and constant basis.}
The neural basis in NBM need not be taken from a single layer. Given a user-specified index set $\mathcal{I}=\{i_1,\ldots,i_K\}$ with $0 \le i_k \le L$, we define
\[
\Phi(X)
=
\big[
A_{i_1}(X)\;\;
A_{i_2}(X)\;\;
\cdots\;\;
A_{i_K}(X)
\big]
\in
\mathbb{R}^{M\times P},
\qquad
P=\sum_{k=1}^K p_{i_k},
\]
where each $A_{i_k}(X)\in\mathbb{R}^{M\times p_{i_k}}$ denotes the corresponding layer output evaluated at the $M$ spatial points in $X$. This allows the basis to combine features from multiple depths. When needed, we further append a column of ones to include a constant basis function, so that spatially uniform components are represented explicitly.

\textbf{Analytic derivatives for PDE operators.}
PDE assembly requires spatial derivatives of the neural basis. Here, for each layer $\ell=0,\ldots,L$, we denote the Jacobian and Hessian by
\[
J_\ell(X)\in\mathbb{R}^{M\times p_\ell\times d},
\qquad
H_\ell(X)\in\mathbb{R}^{M\times p_\ell\times d\times d},
\]
where $J_\ell(X)_{m,:,i}$ and $H_\ell(X)_{m,:,i,j}$ store the first- and second-order derivatives of the layer-$\ell$ outputs at $x_m$. 
These quantities are propagated analytically through the residual network. For the nonlinear branch $z_\ell(X)=\sigma(a_\ell(X))$, we use
\[
\frac{\partial z_\ell}{\partial t_\ell}
=
\sigma'(a_\ell)\odot \alpha_\ell^T,
\qquad
\frac{\partial^2 z_\ell}{\partial t_\ell^2}
=
\sigma''(a_\ell)\odot (\alpha_\ell^2)^T,
\]
with all operations acting elementwise. In the first layer, the skip branch contributes a constant Jacobian and zero Hessian; for deeper layers, $J_{\ell-1}(X)$ and $H_{\ell-1}(X)$ are propagated through the linear and nonlinear branches by the chain rule. This yields analytic gradients, divergences, and Laplacians at the points in $X$, without numerical or automatic differentiation.

\textbf{Neural basis initialization.}
The fixed neural basis is constructed by explicitly initializing the parameters $\{W_\ell,b_\ell,P_\ell\}_{\ell=1}^L$. For layers $\ell\ge 2$, the entries of $W_\ell$ are sampled independently from $\mathcal N(0,1)$, the entries of $b_\ell$ independently from the uniform distribution on $[-r,r]$, where $r>0$ is a prescribed radius parameter, and $P_\ell$ is taken as the identity when $p_\ell=p_{\ell-1}$ and otherwise sampled from a zero-mean Gaussian distribution with variance scaled by $1/\sqrt{p_{\ell-1}}$. The first layer is treated separately to reduce geometric redundancy, since each neuron defines an affine function whose zero level set is a hyperplane and naive random initialization can produce nearly coincident or strongly correlated features. To avoid this, candidate first-layer neurons are filtered by simple geometric diversity criteria; in two dimensions, we exclude lines with negligible intersection length and lines that are nearly parallel to or spatially clustered with previously accepted ones.

\textbf{First-layer scaling.}
To make the first-layer basis independent of the size and location of the physical domain \(\Omega=[\underline{x}_1,\overline{x}_1]\times\cdots\times[\underline{x}_d,\overline{x}_d]\subset\mathbb{R}^d\), we define it with respect to the normalized reference domain $[-1,1]^d$. Let \(s_i=(\overline{x}_i-\underline{x}_i)/2\) and \(c_i=(\overline{x}_i+\underline{x}_i)/2\), so that \(\hat x_i=(x_i-c_i)/s_i\in[-1,1]\). Rather than rescaling coordinates explicitly, we absorb this affine map into the first-layer parameters: if \(W_{1,0}\), \(P_{1,0}\), and \(b_{1,0}\) are defined on the reference domain, then \(W_1 = W_{1,0}\,\mathrm{diag}(s)^{-1}\), \(P_1 = P_{1,0}\,\mathrm{diag}(s)^{-1}\), and \(b_1 = b_{1,0} - W_{1,0}(c\oslash s)\), with \(\oslash\) denoting elementwise division. Since the skip branch is linear in the input coordinates, this rescaling also induces a constant shift, so we evaluate \(s_1(X)=\beta_1(XP_1^T)+\mathbf{1}\gamma_1^T\), with \(\gamma_1=-\beta_1 P_{1,0}(c\oslash s)\). For the axis-aligned rectangular domains considered here, this mapping is exact, ensuring that the first-layer basis is defined consistently on $[-1,1]^d$, while no further domain-dependent scaling is needed in deeper layers.

\section{Geometric interpretation and conditioning effect of dual-layer neural basis}\label{Appx:C}
\vspace{-1.5ex}
\textbf{Geometric interpretation.}
Single-layer neural bases admit a geometric interpretation through the induced hyperplanes \cite{xu2025weak,zhang2023transnet,zhang2024transferable}. 
A random ensemble of such hyperplanes produces a soft partition of the domain, with each basis transitioning rapidly but smoothly across its associated hyperplane. 
However, random initialization can generate near-duplicate hyperplanes, leading to highly correlated features and ill-conditioned projection systems. 
To mitigate this while retaining the underlying hyperplane-induced geometry, we introduce a second linear--nonlinear map that recombines the first-layer features. 
Specifically, for each second-layer node $k=1,\dots,N_2$, we define
\(
z_k(\mathbf{x})
=
\sum_{j=1}^{N_1} W^{(2)}_{kj}\,\phi^{(1)}_j(\mathbf{x}) + b^{(2)}_k
\)
and
\(
\phi^{(2)}_k(\mathbf{x})=\sigma\!\big(z_k(\mathbf{x})\big).
\)
Thus, $z_k(\mathbf{x})$ is a fixed superposition of first-layer hyperplane responses, whose level sets encode a composite soft partition of the domain, while $\phi_k^{(2)}(\mathbf{x})$ is a bounded nonlinear transformation of this aggregated signal. 
In this way, the second layer preserves the geometric organization inherited from the first layer while reducing collinearity and improving conditioning. 
The final PDE solution is then represented as a linear combination of $\{\phi_k^{(2)}\}$. 
In our implementation, the rows of $\mathbf W^{(2)}\in\mathbb{R}^{N_2\times N_1}$ are sampled from $\mathcal N(0,1)$ and $\mathbf b^{(2)}\in\mathbb{R}^{N_2}$ from $[-r,r]$, after which all parameters are kept fixed. 
Fig.~\ref{fig:C1} illustrates this construction through first-layer features, second-layer pre-activation fields, and the corresponding dual-layer architecture.
Red connections highlight how first-layer features are superposed to form $z_k$, and applying the activation produces the corresponding post-activation basis function
$\phi_k^{(2)}(\mathbf{x})=\sigma(z_k(\mathbf{x}))$
that enters the final expansion.
This dual-layer architecture with the fixed linear--nonlinear recombination yields dramatically improved conditioning in the resulting projection systems compared with using $\{\phi_j^{(1)}\}$ directly, while retaining a clear geometric link to the first-layer hyperplane partition.

\textbf{Conditioning effect.}
Depth in the neural basis has a pronounced effect on the conditioning of the neural projection system as the basis size $N_b$ increases. Using the same setup of Fig.~\ref{fig:fig3}a, we compare single-layer, dual-layer, and triple-layer cases. Fig.~\ref{fig:C2} shows that the single-layer case becomes rapidly ill-conditioned, with the condition number reaching $3.34\times 10^{13}$ at $N_b=1000$, whereas adding one additional layer reduces it to $5.76\times 10^{8}$ at the same basis size, and a triple-layer basis further reduces it to $9.25\times 10^{6}$ at $N_b=2000$. 
For the dual-layer and triple-layer cases, all hidden layers are chosen to have the same width so that the comparison isolates the effect of depth rather than incidental width imbalance or bottleneck effects. Under this controlled setting, the improved conditioning is primarily attributable to the reduced basis correlation induced by multi-layer 
recombination. 
A complementary geometric view also suggests that, for the low-dimensional physical domains relevant to most engineering PDEs, raw partition count is unlikely to be the main bottleneck at basis sizes on $O(10^3)$.
With $N_1$ hyperplanes, the maximal number of induced partitions in $\mathbb{R}^d$ is \(P_d(N_1)=\sum_{k=0}^{d}\binom{N_1}{k}\). 
For example, $N_1=1000$ already yields $1001$, $500{,}501$, and $166{,}667{,}501$ partitions in $d=1,2,3$, respectively.

\begin{figure}[!t]
    \centering
    \includegraphics[width=1\linewidth]{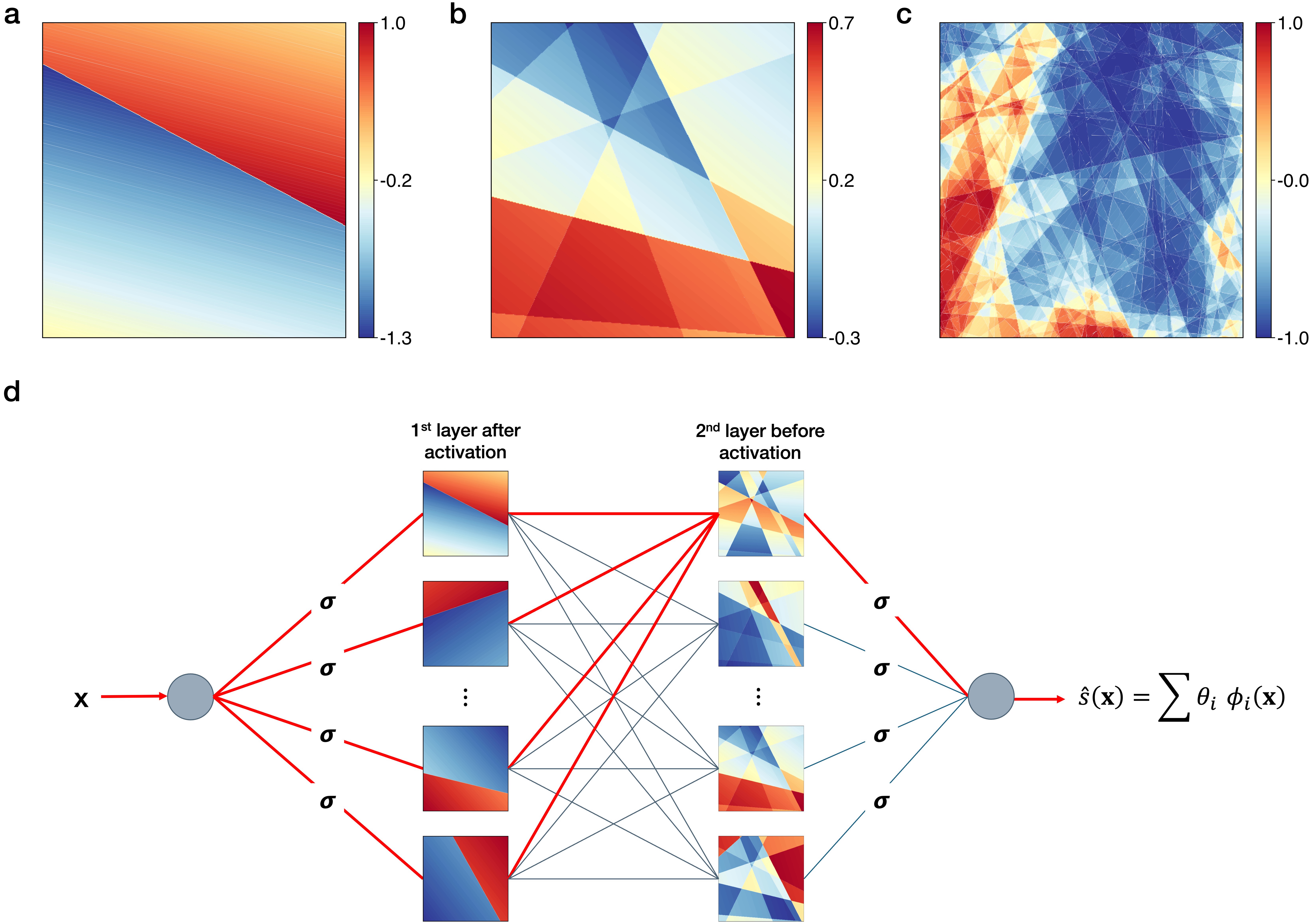}
    \caption{%
        \bfseries Geometric illustration of dual-layer neural basis.\,
        \bfseries a, \normalfont A first-layer hyperplane after activation.\,
        \bfseries b, \normalfont A second-layer pre-activation field formed by superposing $N_1=10$ first-layer features.\,
        \bfseries c, \normalfont A second-layer pre-activation field formed with $N_1=200$ first-layer features.\,
        \bfseries d, \normalfont Schematic of dual-layer basis generator: input produces first-layer hyperplanes, superpositions form pre-activations fields, and post-activation outputs the final basis functions.%
    }
    \label{fig:C1}
\end{figure}
\begin{figure}[!b]
    \centering
    \includegraphics[width=0.65\linewidth]{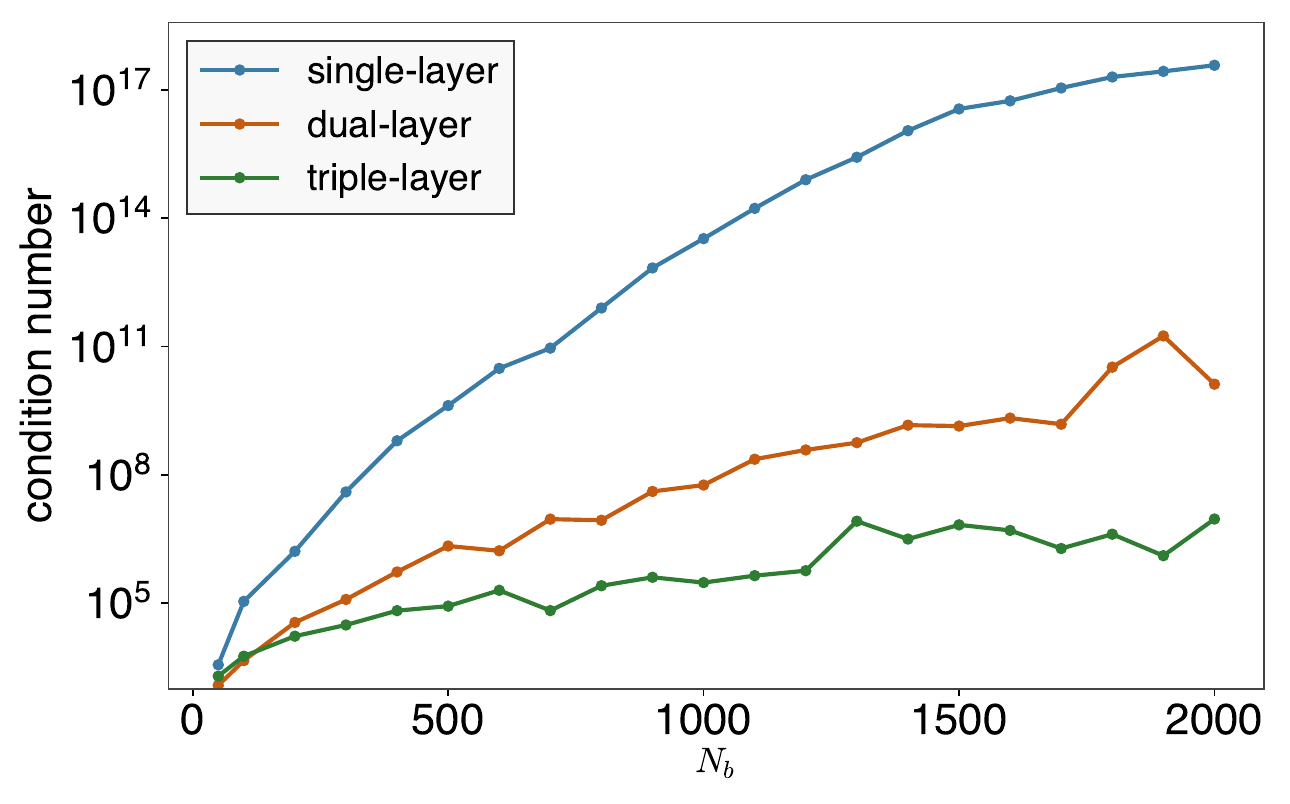}
    \caption{%
        \bfseries Effect of depth in neural basis generation on numerical conditioning.\,
        \normalfont Condition number of the assembled neural projection system versus neural basis size $N_b$ for single-layer, dual-layer, and triple-layer basis generators (logarithmic $y$-axis).%
    }
    \label{fig:C2}
\end{figure} 

\section{Expressivity and stability of neural basis space}\label{Appx:D}
\vspace{-1.5ex} 
Here, we assess how a given neural basis space represents the target pressure, mass-flux, and concentration fields. We use the setup shown in Fig.~\ref{fig:fig3}a and take the corresponding solution snapshots at day 10 as the reference fields.

\textbf{Least-squares projection and best-approximation error.}
We assess neural basis expressivity by the smallest discrete relative $L_2$ error attainable in approximating the reference fields. For a target scalar field $s(\mathbf{x})$ (pressure, a mass-flux component, or concentration) sampled at $M$ collocation points, we build the basis evaluation matrix $\Phi\in\mathbb{R}^{M\times N_b}$ with $\Phi_{ji}=\phi_i(\mathbf{x}_j)$ and perform a projection problem: 
\(\boldsymbol{\theta}^\star
=
\arg\min_{\boldsymbol{\theta}\in\mathbb{R}^{N_b}}
\left\|
\Phi\,\boldsymbol{\theta}-\mathbf{s}_{\mathrm{ref}}
\right\|_2^2
\)
and
\(
\widehat{s}(\mathbf{x})=\sum_{i=1}^{N_b}\theta_i^\star\,\phi_i(\mathbf{x}).
\)
Here $\mathbf{s}_{\mathrm{ref}}\in\mathbb{R}^M$ collects the reference values $s_{\mathrm{ref}}(\mathbf{x}_j)$. The resulting $\|\widehat{s}-s_{\mathrm{ref}}\|_2/\|s_{\mathrm{ref}}\|_2$ is thus the minimum discrete relative $L_2$ error achievable under the chosen sampling and norm. 
For the mass-flux $\mathbf{q}=(q_x,q_y)$, we evaluate expressivity componentwise. 
When no Helmholtz decomposition is used, we apply the projection separately to $q_x$ and $q_y$. 
When Helmholtz decomposition is used, we first write the reference flux as $\mathbf{q}_{\mathrm{ref}}=\mathbf{q}_{\mathrm{ref}}^{(\mathrm{div})}+\mathbf{q}_{\mathrm{ref}}^{(\mathrm{curl})}$, then project the four scalar components $q_{x,\mathrm{ref}}^{(\mathrm{div})}$, $q_{y,\mathrm{ref}}^{(\mathrm{div})}$, $q_{x,\mathrm{ref}}^{(\mathrm{curl})}$, and $q_{y,\mathrm{ref}}^{(\mathrm{curl})}$ in their corresponding neural basis spaces, and finally reconstruct $\widehat{\mathbf q}$ by summation.
We report relative $L_2$ projection errors for $q_x$, $q_y$, and $|\mathbf q|$, together with the associated condition numbers of the respective projections.

\textbf{Pressure projection: dual-layer versus single-layer bases.}
Because pressure is smooth and dominated by low-frequency content, both single- and dual-layer bases approximate the reference field very well. The reference pressure $p_{\mathrm{ref}}$ is shown in Fig.~\ref{fig:D1}a, and the corresponding absolute projection errors $|\widehat{p}-p_{\mathrm{ref}}|$ for the dual-layer and single-layer bases are shown in Fig.~\ref{fig:D1}b--c. 
As summarized in Fig.~\ref{fig:D1}d, replacing the dual-layer basis with the single-layer basis increases the best-approximation error only slightly, from $0.001\%$ to $0.0011\%$, but increases the condition number much more severely, from $2.6\times 10^8$ to $1.7\times 10^{15}$. The main difference is therefore not approximation quality but numerical conditioning, and in the PDE-constrained least-squares projection such ill-conditioning can amplify numerical noise and destabilize the solution coefficient updates.

\begin{figure}[!b]
    \centering
    \includegraphics[width=1\linewidth]{figs/fig_d1.jpg}
    \caption{%
    \bfseries Neural basis expressivity and stability for pressure.\, 
    \bfseries a, \normalfont FVM reference pressure $p$ (bar).\, 
    \bfseries b-c, \normalfont Absolute projection error using dual-layer and single-layer scalar neural basis, respectively \,
    \bfseries d, \normalfont Relative $L_2$ projection error and condition number of the projection system.
    \bfseries Note:
    \normalfont\ Pressure is reported in $\mathrm{bar}$.
    }
    \label{fig:D1}
\end{figure}

\textbf{Mass-flux projection: dual-layer versus single-layer bases, with and without Helmholtz split.}
The mass-flux projection highlights two practically important effects: the Helmholtz split improves best-approximation, while the single-layer basis can substantially worsen conditioning. The reference magnitude $|\mathbf{q}_{\mathrm{ref}}|$ and the componentwise absolute projection errors for $q_x$ and $q_y$ are shown in Fig.~\ref{fig:D2}a--e, and the relative $L_2$ errors together with the condition numbers are summarized in Fig.~\ref{fig:D2}f. Within each basis generator, introducing the Helmholtz split reduces the projection error. For the dual-layer basis, the relative errors decrease from $1.00\%$ to $0.86\%$ for $q_x$, from $1.66\%$ to $1.21\%$ for $q_y$, and from $1.10\%$ to $0.80\%$ for $|\mathbf{q}|$; for the single-layer basis, they decrease from $1.29\%$ to $1.01\%$, from $1.92\%$ to $1.48\%$, and from $1.35\%$ to $1.06\%$, respectively. With the Helmholtz split, replacing the dual-layer basis by the single-layer basis still increases the best-approximation error, from $0.86\%$ to $1.01\%$ for $q_x$, from $1.21\%$ to $1.48\%$ for $q_y$, and from $0.80\%$ to $1.06\%$ for $|\mathbf{q}|$, while conditioning deteriorates much more severely: for $q_x$, the div-free block increases from $2.6\times 10^8$ to $1.2\times 10^{13}$ and the curl-free block from $6.3\times 10^6$ to $8.0\times 10^{12}$, with similar behavior for $q_y$. Without the Helmholtz split, replacing the dual-layer basis by the single-layer basis increases the relative errors from $1.00\%$ to $1.29\%$ for $q_x$, from $1.66\%$ to $1.92\%$ for $q_y$, and from $1.10\%$ to $1.35\%$ for $|\mathbf{q}|$, while the condition number rises from $2.6\times 10^8$ to $1.7\times 10^{15}$. Compared with pressure, the mass-flux errors are a few orders of magnitude larger, which is expected because $\mathbf q$ is less smooth: it depends on pressure gradients and multiscale permeability, both of which amplify high-frequency content. This is also reflected in Fig.~\ref{fig:D2}g, where the projected and reference spectra agree well over the dominant low-to-intermediate wavenumbers, while the remaining discrepancy is concentrated in the high-wavenumber tail.

\begin{figure}[!h]
    \centering
    \includegraphics[width=1\linewidth]{figs/fig_d2.jpg}
    \caption{%
    \bfseries Neural basis expressivity and stability for mass-flux.\, 
    \bfseries a, \normalfont FVM reference (kg\,m$^{-2}$\,s$^{-1}$).\, 
    \bfseries b-e, \normalfont Absolute projection error using dual-layer neural vector basis w/ Helmholtz decomposition, dual-layer neural vector basis w/o Helmholtz decomposition, single-layer neural vector basis w/ Helmholtz decomposition, and single-layer neural vector basis w/o Helmholtz decomposition, respectively.\,
    \bfseries f, \normalfont Relative $L_2$ projection errors for $q_x$, $q_y$, and $|\mathbf{q}|$ and condition numbers of the corresponding projection systems. \,
    \bfseries g, \normalfont Energy spectra $E(k)$ of the reference and the best projected mass-flux fields.
    \bfseries Note:
    \normalfont\ Mass-flux is reported in $\mathrm{kg~day^{-1}~m^{-2}}$.
    }
    \label{fig:D2}
\end{figure}

\textbf{Concentration projection: dual-layer versus single-layer bases and the effect of stabilization.}
Concentration projection behaves differently from pressure and mass flux because sharp advective fronts limit the best-approximation power of smooth neural bases.
However, stabilization can make the target more compatible with the smooth neural bases and reduce projection error markedly. 
The reference concentration, the corresponding projection errors of the dual-layer and single-layer bases, and the error--conditioning summary are shown in Fig.~\ref{fig:D3}a--d. Replacing the dual-layer basis by the single-layer basis increases the best-approximation error only slightly, from $0.92\%$ to $0.99\%$, but increases the condition number severely, from $2.6\times 10^8$ to $1.7\times 10^{15}$. 
The larger approximation difficulty comes instead from the limited regularity of advective concentration fields: direct projection of a sharp   produces substantially larger error, as shown in Fig.~\ref{fig:D3}e,g,i,k, because such interfaces are difficult for a smooth neural expansion to represent. 
In the NBM transport discretization, first-order upwinding introduces numerical stabilization by creating a narrow grid-scale transition zone, thereby replacing the discontinuous step by a profile with finite transition thickness. 
This makes the target much more compatible with the neural basis, and accordingly a five-cell smeared front yields a much smaller projection error, as shown in Fig.~\ref{fig:D3}f,h,j,k. The remaining discrepancy is localized near the stabilized interface. Fig.~\ref{fig:D3}k also highlights that the sharp-front and five-cell smeared profiles appear visually similar, yet their relative $L_2$ difference is $9.19\%$, while the best-approximation error for projecting the discontinuous front is $7.73\%$. Overall, these results show that transport benefits substantially from stabilization and that the neural basis represents the stabilized transport profile with low error.

\begin{figure}[!b]
    \centering
    \includegraphics[width=1\linewidth]{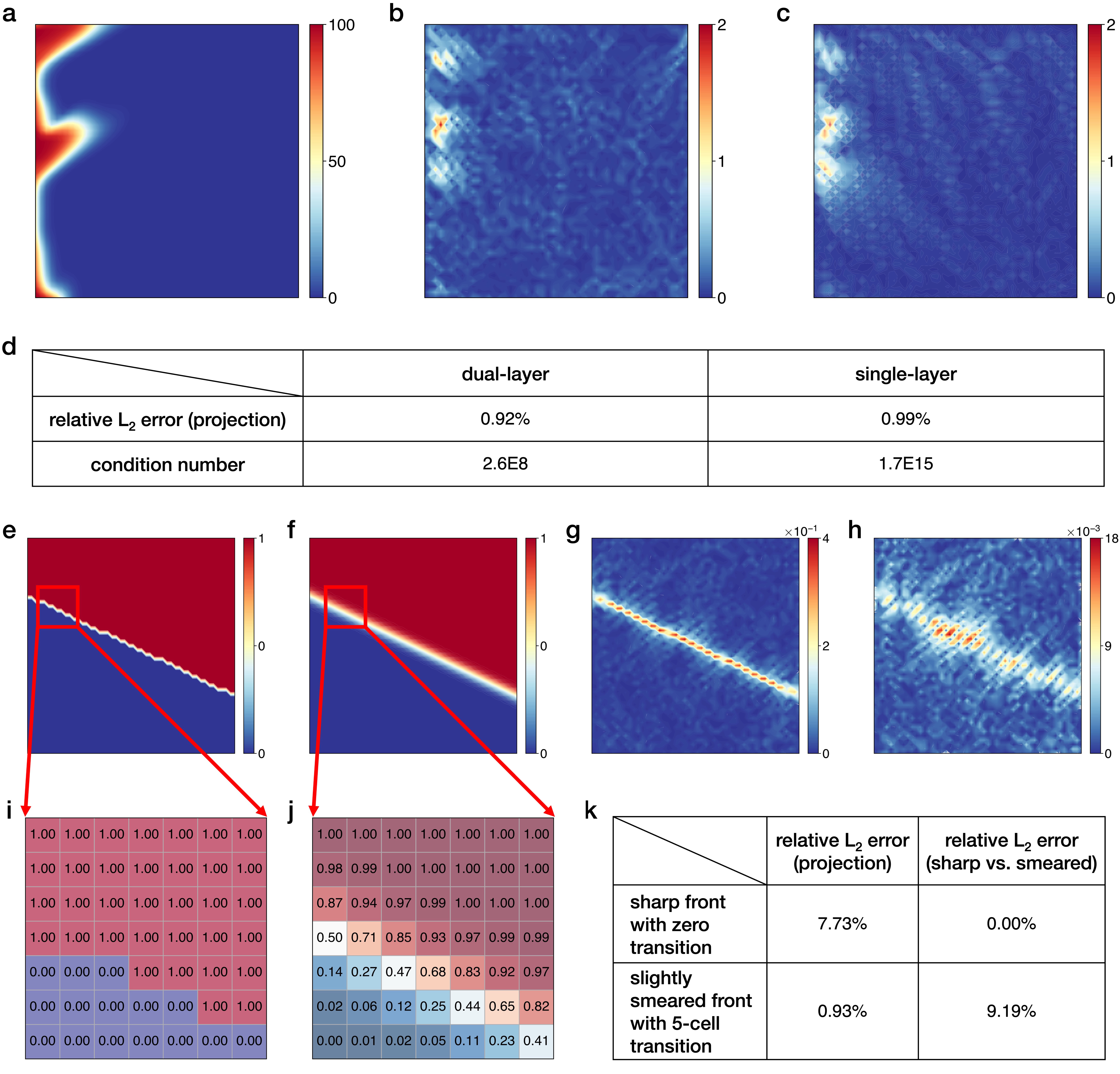}
    \caption{%
    \bfseries Neural-basis expressivity and stability for concentration.\, 
    \bfseries a, \normalfont FVM reference.\,
    \bfseries b-c, \normalfont Absolute projection error using dual-layer and single neural scalar basis, respectively.\,
    \bfseries d, \normalfont Relative $L_2$ projection error and condition number of the projection system.\,
    \bfseries e, \normalfont Sharp-front field with zero transition thickness (discontinuous step).\,
    \bfseries f, \normalfont Slightly smeared front with a 5-cell transition (stabilized profile).\,
    \bfseries g-h, \normalfont Absolute projection error for sharp-front and smeared-front cases, respectively.\,
    \bfseries i-j, \normalfont Cell values near sharp-front and smeared-front, respectively.\,
    \bfseries k, \normalfont Relative $L_2$ projection errors for the sharp-front and smeared-front fields, and the relative $L_2$ difference between the two fields (sharp vs.\ smeared).
    \bfseries Note:
    \normalfont\ Concentration is reported in $\mathrm{ppm}$.
    }
    \label{fig:D3}
\end{figure}

\section{Construction and effect of physics-conforming neural vector bases}\label{Appx:E}
\vspace{-1.5ex}
\textbf{Construction.}
Given scalar neural basis functions $\{\phi_i(\mathbf{x})\}$, we construct curl-free and divergence-free neural vector bases by applying differential operators directly to the scalar basis. In 2D, we define
\[
\boldsymbol{\phi}_i^{(\mathrm{curl})}(\mathbf{x})
=
\nabla \phi_i(\mathbf{x})
=
\begin{bmatrix}
\partial_x \phi_i(\mathbf{x})\\[2pt]
\partial_y \phi_i(\mathbf{x})
\end{bmatrix},
\qquad
\boldsymbol{\phi}_i^{(\mathrm{div})}(\mathbf{x})
=
\nabla^\perp \phi_i(\mathbf{x})
=
\begin{bmatrix}
\partial_y \phi_i(\mathbf{x})\\[2pt]
-\partial_x \phi_i(\mathbf{x})
\end{bmatrix},
\]
so that $\nabla\times \boldsymbol{\phi}_i^{(\mathrm{curl})}\equiv 0$ and $\nabla\cdot \boldsymbol{\phi}_i^{(\mathrm{div})}\equiv 0$. The resulting mass-flux representation takes the form
\[
q_{x,NN}(\mathbf{x})
=
\sum_i
\Big[
\theta_i^{(\mathrm{div})}\,\partial_y \phi_i(\mathbf{x})
+
\theta_i^{(\mathrm{curl})}\,\partial_x \phi_i(\mathbf{x})
\Big],
\quad
q_{y,NN}(\mathbf{x})
=
\sum_i
\Big[
-\theta_i^{(\mathrm{div})}\,\partial_x \phi_i(\mathbf{x})
+
\theta_i^{(\mathrm{curl})}\,\partial_y \phi_i(\mathbf{x})
\Big].
\]
In 3D, the curl-free counterpart is constructed in the same way as the 2D case.  
However, for the divergence-free counterpart, we use vector potentials for the construction. 
For each $\phi_i$, we define \(\mathbf{F}_i^{(x)}(\mathbf{x})=\mathbf{e}_x\,\phi_i(\mathbf{x})\), \(\mathbf{F}_i^{(y)}(\mathbf{x})=\mathbf{e}_y\,\phi_i(\mathbf{x})\), and \(\mathbf{F}_i^{(z)}(\mathbf{x})=\mathbf{e}_z\,\phi_i(\mathbf{x})\), with \(\mathbf{e}_x=(1,0,0)^\top\), \(\mathbf{e}_y=(0,1,0)^\top\), and \(\mathbf{e}_z=(0,0,1)^\top\).
We then take curls to obtain
\[
\boldsymbol{\phi}_{i}^{(\mathrm{div},x)}
=
\nabla\times \mathbf{F}_i^{(x)}
=
\begin{bmatrix}
0\\[2pt]
\partial_z \phi_i\\[2pt]
-\partial_y \phi_i
\end{bmatrix},
\,\,\,
\boldsymbol{\phi}_{i}^{(\mathrm{div},y)}
=
\nabla\times \mathbf{F}_i^{(y)}
=
\begin{bmatrix}
-\partial_z \phi_i\\[2pt]
0\\[2pt]
\partial_x \phi_i
\end{bmatrix},
\,\,\,
\boldsymbol{\phi}_{i}^{(\mathrm{div},z)}
=
\nabla\times \mathbf{F}_i^{(z)}
=
\begin{bmatrix}
\partial_y \phi_i\\[2pt]
-\partial_x \phi_i\\[2pt]
0
\end{bmatrix},
\]
each of which is divergence-free by construction. 
The divergence-free and curl-free parts of the flux are then represented as
\[
\mathbf{q}_{NN}^{(\mathrm{div})}(\mathbf{x})
=
\sum_i
\Big(
\theta_i^{(\mathrm{div},x)}\,\boldsymbol{\phi}_{i}^{(\mathrm{div},x)}(\mathbf{x})
+
\theta_i^{(\mathrm{div},y)}\,\boldsymbol{\phi}_{i}^{(\mathrm{div},y)}(\mathbf{x})
+
\theta_i^{(\mathrm{div},z)}\,\boldsymbol{\phi}_{i}^{(\mathrm{div},z)}(\mathbf{x})
\Big),
\]
\[
\mathbf{q}_{NN}^{(\mathrm{curl})}(\mathbf{x})
=
\sum_i \theta_i^{(\mathrm{curl})}\,\boldsymbol{\phi}_i^{(\mathrm{curl})}(\mathbf{x}).
\]

\textbf{Effect.}
We assess the effect of the physics-conforming construction using the setups provided in Fig.~\ref{fig:fig2}a and Fig.~\ref{fig:fig3}a.
As summarized in Table~\ref{tab:E1}, we report (i) the conservation residuals from the least-squares solve, (ii) relative $L_2$ errors or differences for pressure ($p$), velocity ($u_x$ and $u_y$), and concentration ($c$), and (iii) spectral discrepancy metrics, namely the relative spectral error and the Kolmogorov--Smirnov (K--S) distance. The conservation residual is evaluated from the fitted least-squares residual vector \(\mathcal{E}=\mathbf{A}\boldsymbol{\theta}^*-\mathbf{b}\) by extracting the entries associated with the mass-balance equations and computing the normalized quantity \(\mathcal{E}_{\mathrm{rel}}^{\mathrm{cons}}=\|\mathcal{E}_{\mathrm{cons}}\|_2^2/\|\mathbf{b}\|_2^2\). For the heterogeneous case, we report relative $L_2$ differences rather than errors, since no mesh-converged ground truth is available for the discrete-valued heterogeneous field and the finite-volume solution on the same spatial resolution is used as the reference baseline. Overall, Table~\ref{tab:E1} shows that enforcing the physics-conforming vector structure substantially reduces conservation residuals and improves solution quality, with especially strong gains in the heterogeneous setting, including clear reductions in concentration error and spectral mismatch. In incompressible settings, the benefit is even more immediate, since the divergence-free constraint is represented exactly by construction and can remove the need for a separate conservation equation in the least-squares system when source and sink terms are imposed as internal boundary conditions.

\begin{table}[ht!]
    \centering
    \caption{\bfseries Effect of physics-conforming neural vector bases.}
    \vspace{-0.8em}
    \label{tab:E1}
    \includegraphics[width=1\linewidth]{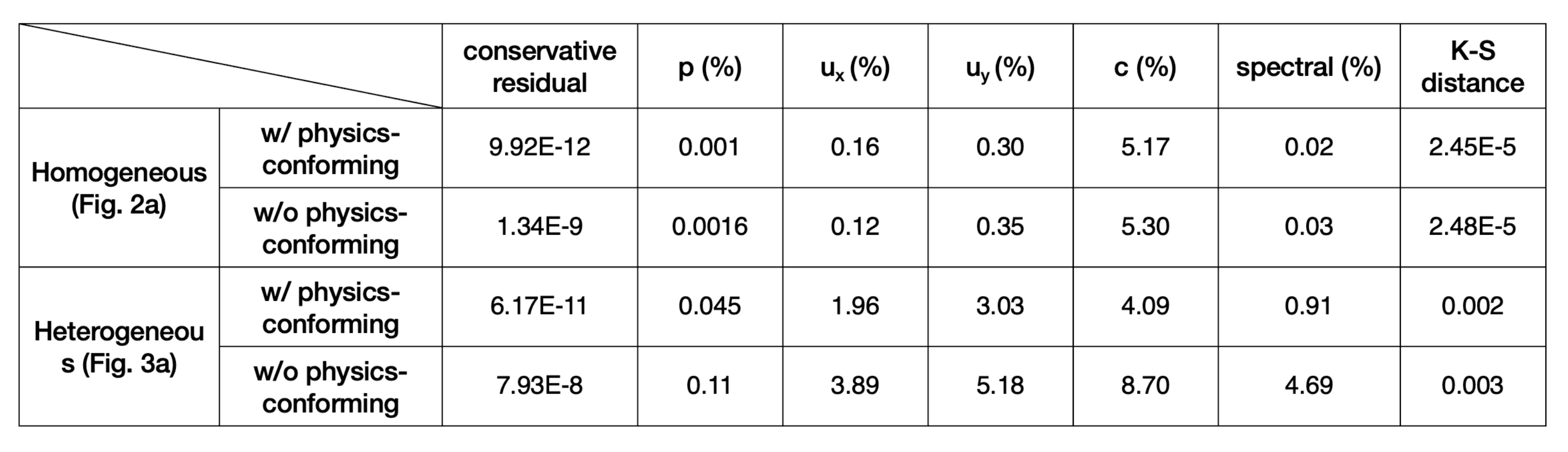}
\end{table}
\vspace{-15pt}

\section{Carbon storage example setups}\label{Appx:F}
\vspace{-1.5ex}
We consider a supercritical CO$_2$ storage benchmark in the square porous domain $\Omega=[0,762]~\mathrm{m}\times[0,762]~\mathrm{m}$ under two permeability settings: (i) a homogeneous case with constant permeability $\kappa=0.3~\mathrm{D}$ (Fig.~\ref{fig:fig2}a), and (ii) a heterogeneous case in which $\kappa(\mathbf{x})$ is given by a geologically realistic Gaussian random field (Fig.~\ref{fig:fig3}a), with permeability contrast $502$. A passive tracer is injected from the left boundary, and all other settings are identical between the two cases.

\textbf{Boundary conditions.}
On the left boundary ($x=0$), we prescribe pressure $p=103.4~\mathrm{bar}$ and a passive tracer pulse with concentration $c_{\mathrm{in}}=100~\mathrm{ppm}$ during the first 10 days. On the top boundary ($y=762~\mathrm{m}$), we impose an outward mass-flux $q_y=586~\mathrm{kg~day^{-1}~m^{-2}}$. The right boundary ($x=762~\mathrm{m}$) is no-flow ($q_x=0$), and the bottom boundary ($y=0$) is no-flow ($q_y=0$).

\textbf{Porous media and fluid properties.}
Besides the permeability field, other porous media and fluid properties are uniform: porosity $\varepsilon=0.25$, viscosity $\mu=0.04~\mathrm{cP}$, compressibility $c_f=2.90\times10^{-3}~\mathrm{bar}^{-1}$, and reference density $\rho_0=384~\mathrm{kg\,m^{-3}}$ at $50~^\circ\mathrm{C}$ and $p_0=103.4~\mathrm{bar}$.

\textbf{Reference and discretizations.}
For the homogeneous case, we use an finite-volume solution on a $1000\times1000$ structured uniform mesh as a ground-truth proxy. For the heterogeneous case, an analogous fine-grid ground truth is not available by refinement, since the permeability is specified through a discrete heterogeneous realization rather than a known continuous field. 
In both cases, the baseline finite-volume solution is computed on a $50\times50$ structured uniform mesh. 
For NBM, we use collocation on a matching $50\times50$ uniform interior grid (2500 interior points), together with 50 uniformly distributed collocation points on each boundary, for a total of 2700 collocation points. 
The vanilla PINN baselines use the same spatial resolution. 
For comparisons involving the energy-consistent weight \eqref{eq:ls-weight}, the baseline choice is uniform weighting with all weights set to 1.

\section{Manufactured-solution benchmark}\label{Appx:G}
\vspace{-1.5ex}
To quantify accuracy in a multiscale setting where grid refinement does not provide ground-truth proxy, we introduce a manufactured-solution benchmark based on the simplified incompressible system \(\mathbf{u}=-(\kappa(x,y)/\mu)\nabla p\) and \(\nabla\cdot\mathbf{u}=0\) on a regular domain \(\Omega\) with constant viscosity \(\mu\).
The permeability is chosen as a separable multiscale field $\kappa(x,y)=a(x)b(y)$, where the smooth oscillatory 1D factors are given by \(a(x)=\exp\!\left(s_x \frac{\varphi_x(x)}{\max|\varphi_x|}\right)\) and \(b(y)=\exp\!\left(s_y \frac{\varphi_y(y)}{\max|\varphi_y|}\right)\), with \(\varphi_x(x)=\beta_{x1}\sin(2\pi x)+\beta_{x2}\cos(4\pi x)\), \(\varphi_y(y)=\beta_{y1}\cos(2\pi y)+\beta_{y2}\sin(2\pi y)\), and \(s_x=\tfrac12\log(\mathcal{C}_x)\), \(s_y=\tfrac12\log(\mathcal{C}_y)\) controlling the 1D contrasts \(\mathcal{C}_x\) and \(\mathcal{C}_y\).
All permeability values are evaluated at cell centers of a uniform grid. 
An exact pressure field is then manufactured from the antiderivatives \(I_x(x)=\int_{x_{\min}}^{x}\frac{1}{a(s)}\,ds\) and \(I_y(y)=\int_{y_{\min}}^{y}\frac{1}{b(s)}\,ds\) by setting \(p(x,y)=p_0 + B\bigl(C_x I_x(x)+C_y I_y(y)\bigr)\), where $p_0$ is a reference pressure, $B$ sets the amplitude, and $C_x,C_y$ control the relative strengths of the two coordinate contributions. This gives \(\partial_x p=B\,\frac{C_x}{a(x)}\) and \(\partial_y p=B\,\frac{C_y}{b(y)}\), and hence the analytic velocity field \(u_x(x,y)=-\frac{B C_x}{\mu}\,b(y)\) and \(u_y(x,y)=-\frac{B C_y}{\mu}\,a(x)\), so that $u_x$ depends only on $y$, $u_y$ depends only on $x$, and the divergence-free constraint holds identically. Boundary conditions for $p$ are taken directly from the manufactured pressure, and the benchmark therefore admits an exact solution pair $(p,\mathbf{u})$ for any prescribed contrasts and coefficients.

In implementation, the permeability is assembled on cell centers as $\kappa_{ij}=k_0\,a(x_i)b(y_j)$ with uniform scaling factor $k_0$. The integrals $I_x$ and $I_y$ are evaluated on the corresponding 1D grids to construct $p_{ij}$ at cell centers, using either cumulative trapezoidal integration on the uniform grid or higher-accuracy piecewise Gauss--Legendre 8-point quadrature on each cell interval. The exact velocity is evaluated from the closed-form expressions above, avoiding numerical differencing.

For the experiments of Fig.~\ref{fig:fig3}k--s, we set $\Omega=[-1,1]\times[-1,1]$, $p_0=34~\mathrm{bar}$, $\mu=1.0~\mathrm{cp}$, $B=-20$, $(C_x,C_y)=(1.0,-0.6)$, and $k_0=0.1$. 
The permeability contrasts are sets as $\mathcal{C}_x=\max(a)/\min(a)=200$ and $\mathcal{C}_y=\max(b)/\min(b)=20$, yielding an overall contrast of about $1078$; varying $(\mathcal{C}_x,\mathcal{C}_y)$ generates fields with different contrast levels. This benchmark provides exact pressure and velocity only. The concentration reference is obtained by solving the transport equation with an finite-volume discretization using first-order upwinding driven by the manufactured velocity field, since an exact closed-form concentration solution is generally unavailable for heterogeneous advection under general initial and boundary conditions.

\section{NBM--OL implementation and experiment configuration}\label{Appx:H}
\vspace{-1.5ex} 
The NBM--OL implementation follows the structured workflow summarized in Fig.~\ref{fig:fig4}. We consider three operator-learning settings: \textbf{Case-I}, a permeability-varying steady Darcy flow operator; \textbf{Case-II}, a boundary-flux-varying Darcy flow operator; and \textbf{Case-III}, a boundary-flux/concentration-varying Darcy--transport operator. Across all cases, NBM--OL learns a map from the problem parameters (and time, when applicable) to solution coefficients in the neural basis space, enabling rapid inference of pressure, flux/velocity, and concentration fields. 

The common workflow is as follows: (i) define a parameterization of the operator family; (ii) generate coefficient snapshots by solving sampled training instances using NBM; (iii) apply POD blockwise to compress the coefficient snapshots into latent coordinates; (iv) train an MLP operator in the latent space using the residual metric $\mathcal{E}_{\mathrm{rel}}$ as a self-supervised objective; and (v) at inference, predict the latent coordinates, reconstruct the full coefficients, and recover the solution fields by neural basis expansion. 
When POD compression is not used, steps (ii) and (iii) are omitted.

All experiments use the square domain $\Omega=[0,762]~\mathrm{m}\times[0,762]~\mathrm{m}$. Interior residuals are evaluated on a structured $100\times100$ collocation set ($10{,}000$ points), and boundary residuals on 400 uniformly distributed boundary points with corners excluded. Pressure, mass-flux (or velocity in the incompressible case), and concentration are represented by separate neural bases, all with size $N_b=800$. When POD is used, it is applied separately to each coefficient block. OOD tests are constructed by extending one or more parameter axes and/or the time horizon beyond the training ranges.

\textbf{Case-I: permeability-varying steady Darcy flow operator.}
We impose mixed boundary conditions: $p=69~\mathrm{bar}$ on $x=0$, $u_x=0.3~\mathrm{m~day^{-1}}$ on $x=762~\mathrm{m}$, and $u_y=1.5~\mathrm{m~day^{-1}}$ on $y=0$ and $y=762~\mathrm{m}$, using the outward-normal sign convention. The base permeability template $\kappa_{\mathrm{ref}}(\mathbf{x})$ is defined on a $100\times100$ grid, and heterogeneity is parameterized by a piecewise-constant $5\times5$ blockwise field (block25) generated from a Gaussian random field. Each block25 realization is upsampled by block repetition and applied multiplicatively to $\kappa_{\mathrm{ref}}(\mathbf{x})$. We generate $N_s=1000$ realizations and compute steady NBM coefficient snapshots $(\boldsymbol{\theta}_{\mathrm{div}},\boldsymbol{\theta}_p)$; only the divergence-free velocity block is needed because this setup is incompressible. POD retains $r_p=16$ and $r_q=16$ modes, yielding $\mathbf{z}=[\mathbf{z}_p;\mathbf{z}_q]\in\mathbb{R}^{32}$. The latent map is learned by a two-branch MLP, one branch for pressure and one for velocity, each with two hidden layers of width 64 and $\tanh$ activation. Training uses Adam with learning rate $10^{-3}$, batch size 16, and 500 epochs. OOD tests are generated by perturbing the Gaussian random field controls, including variance/contrast scaling, correlation-length changes, and rotation-angle shifts.

\textbf{Case-II: boundary-flux-varying Darcy flow operator.}
The operator family is parameterized by a scalar top-boundary mass-flux control $g_{\mathrm{top}}$, uniformly sampled from $[488,976]~\mathrm{kg~day^{-1}~m^{-2}}$. We prescribe $p=103~\mathrm{bar}$ on $x=0$ and impose no-flux conditions on the right and bottom boundaries. The permeability field is fixed and multiscale on a $100\times100$ grid. We generate $N_s=200$ scenarios, each simulated for $N_{\mathrm{Darcy}}=20$ Darcy time steps with $\Delta T=10.0~\mathrm{days}$, yielding $N_{\mathrm{snap}}=4000$ snapshots for each coefficient block $(\boldsymbol{\theta}_{\mathrm{div}},\boldsymbol{\theta}_{\mathrm{curl}},\boldsymbol{\theta}_p)$. POD retains $r_{\mathrm{div}}=8$, $r_{\mathrm{curl}}=8$, and $r_p=8$ modes, giving $\mathbf{z}=[\mathbf{z}_{\mathrm{div}};\mathbf{z}_{\mathrm{curl}};\mathbf{z}_p]\in\mathbb{R}^{24}$. The latent operator $\mathcal{F}(g_{\mathrm{top}},t)$ is represented by three MLP branches, each with two hidden layers of width 32 and $\tanh$ activation. Training uses Adam with learning rate $10^{-3}$, batch size 16, and 500 epochs. OOD evaluation extends the $g_{\mathrm{top}}$ range by $50\%$ and the prediction horizon by $100\%$ beyond the training ranges.

\textbf{Case-III: boundary-flux/concentration-varying Darcy--transport operator.}
The coupled operator family is parameterized by $(g_{\mathrm{top}},c_{\mathrm{left}})$, where $g_{\mathrm{top}}$ is the top-boundary Darcy mass-flux control and $c_{\mathrm{left}}$ is the injected tracer concentration on the left boundary, sampled from $\mathcal{U}(50.0,100.0)~\mathrm{ppm}$. The Darcy boundary setup and fixed permeability field are the same as in Case-II unless stated otherwise. We generate $N_s=80$ independent scenarios. For each $g_{\mathrm{top}}$, the pretrained Case-II model is used to infer the Darcy velocity sequence, and each Darcy time step is split into $N_{\mathrm{tr}}=8$ transport substeps. We use $N_{\mathrm{Darcy}}=10$ Darcy coarse steps, so the total number of transport substeps is $N_{\mathrm{sub}}=N_{\mathrm{Darcy}}N_{\mathrm{tr}}$. POD is applied to the transport coefficient snapshots $\{\boldsymbol{\theta}_c\}$ and retains $r_c=20$ modes, yielding $\mathbf{z}_c\in\mathbb{R}^{20}$. The latent map $\mathbf{z}_c(t)=\mathcal{F}(g_{\mathrm{top}},c_{\mathrm{left}},t_{\mathrm{norm}})$ is learned by a single MLP with two hidden layers of width 64, $\tanh$ activation, and weight normalization. Training uses Adam with learning rate $5\times10^{-3}$, batch size 32, and 1000 epochs. OOD evaluation extends both $(g_{\mathrm{top}},c_{\mathrm{left}})$ by $50\%$ and the prediction horizon by $100\%$ beyond the training ranges.

\section{Physics-informed neural network (PINN) baseline}\label{Appx:I}
\vspace{-1.5ex} 
We use a two-network, two-stage vanilla PINN baseline for the coupled Darcy--transport problem. In Stage A, pressure is approximated by a neural network $p(\cdot;\omega_p)$ and trained by minimizing a weighted sum of mean-squared residuals at collocation points, consisting of the strong-form slightly compressible Darcy residual in the interior together with boundary penalties enforcing the left Dirichlet pressure condition and mass-flux conditions on the other boundaries, where the normal mass flux is obtained from $\nabla p$ via automatic differentiation. In Stage B, concentration is approximated by a second network $c(\cdot;\omega_c)$; after freezing $\omega_p$, we train $\omega_c$ using a conservative transport residual at space--time collocation points, together with penalties enforcing the initial condition $c(\cdot,0)=0$ and the time-dependent inlet Dirichlet condition on the left boundary. To reduce cost while preserving PDE-level coupling, once Stage A converges we precompute the mass-flux components and their divergence induced by $p(\cdot;\omega_p)$ on the full space--time collocation set and reuse them throughout Stage B, so that only $c(\cdot;\omega_c)$ is differentiated during transport training. 

Both networks are fully connected MLPs with $\tanh$ activations, width $64$, depth $4$, and scalar output, taking $(x,y,t)$ as input. Collocation uses a fixed $50\times 50$ spatial grid of cell centers and $N_t=10$ time levels, giving $50\times 50\times 10$ interior space--time points, while boundary collocation points are the boundary cell centers replicated across the same time levels. For Stage A, we use Adam with learning rate $10^{-3}$ for $20{,}000$ iterations with mini-batches of size $4096$ for interior points and $1024$ for each boundary set, followed by full-batch L-BFGS for $2{,}000$ iterations using strong Wolfe line search, history size $30$, $\texttt{tolerance\_grad}=10^{-10}$, and $\texttt{tolerance\_change}=10^{-12}$; the loss weights are $(w_{\mathrm{pde}}, w_L, w_R, w_B, w_T)=(1,1,1,1,10)$. For Stage B, we use Adam with learning rate $10^{-3}$ for $10{,}000$ iterations with mini-batches of size $4096$ for interior points, $1024$ for the initial condition, and $1024$ for the inlet condition, followed by full-batch L-BFGS for $2{,}000$ iterations with the same settings; the loss weights are $(w_{\mathrm{pde}}^{(c)}, w_{\mathrm{ic}}, w_{\mathrm{in}})=(1,0.01,10)$. For the multiscale cases, both Stage A and Stage B use $50{,}000$ Adam iterations followed by $20{,}000$ L-BFGS iterations.

\section*{CRediT author statement}
\vspace{-1.5ex} 
\textbf{Yuhe Wang}: Conceptualization, Methodology, Software, Validation, Formal analysis, Investigation, Visualization, Writing - Original Draft, Review \& Editing. \textbf{Min Wang}: Conceptualization, Methodoloty, Formal analysis, Investigation, Writing - Review \& Editing.

\section*{Data and Code Availability}
\vspace{-1.5ex} 
The code and data used in this study are available in the GitHub repository \url{https://github.com/neural-basis-method/nbm}

\bibliographystyle{unsrt}
\bibliography{references}

\end{document}